\definecolor{codegreen}{rgb}{0,0.6,0}
\definecolor{codegray}{rgb}{0.5,0.5,0.5}
\definecolor{codepurple}{rgb}{0.58,0,0.82}
\definecolor{backcolour}{rgb}{0.95,0.95,0.92}
\lstdefinestyle{mystyle}{
    backgroundcolor=\color{backcolour},   
    commentstyle=\color{codegreen},
    keywordstyle=\color{magenta},
    numberstyle=\tiny\color{codegray},
    stringstyle=\color{codepurple},
    basicstyle=\ttfamily\footnotesize,
    breakatwhitespace=false,         
    breaklines=true,                 
    captionpos=b,                    
    keepspaces=true,                 
    numbers=left,                    
    numbersep=5pt,                  
    showspaces=false,                
    showstringspaces=false,
    showtabs=false,                  
    tabsize=2
}
\let\@fnsymbol\@arabic
\newcommand{\change}[3]{\Delta#1(#2,#3)}
\theoremstyle{definition}
\newtheorem{theorem}{Theorem}[section]
\newtheorem{thm}[theorem]{Theorem}
\newtheorem{lemma}[theorem]{Lemma}
\newtheorem{defn}[theorem]{Definition}
\newtheorem{ex}[theorem]{Example}
\newtheorem{remark}[theorem]{Remark}
\newtheorem{notation}[theorem]{Notation}
\newcommand{\N}{\mathbb N}
\newcommand{\Z}{\mathbb Z}
\newcommand{\RR}{\mathcal R}
\newcommand{\CC}{\mathcal C}
\newcommand{\XX}{\mathcal X}
\newcommand{\NN}{\mathcal N}
\definecolor{TAMU}{RGB}{140,0,0}
\definecolor{myblue}{RGB}{0,0,198}
\definecolor{myred}{RGB}{182,0,0}
\title{
Deficiency of chemical reaction networks: \\
The effect of operations that preserve multistationarity \\ and periodic orbits
}
\author{Awildo Gutierrez\thanks{Department of Mathematics and Statistics, Hamilton College. awildo.gutierrez(at)wisc.edu}, Elijah Leake\thanks{Department of Mathematical Sciences, DePaul University. ejohnj247(at)gmail.com}, Caelyn Rivas-Sobie\thanks{Department of Mathematics and Natural Sciences, Arizona State University - West. csobie1(at)jh.edu}, \\Jordy Lopez Garcia\thanks{Department of Mathematics, Texas A\&M University. jordy.lopez(at)tamu.edu}, Anne Shiu\thanks{Department of Mathematics, Texas A\&M University. annejls(at)tamu.edu}}
\date{November 29, 2024}
\begin{document}
\maketitle
\begin{abstract}
We investigate six operations on chemical reaction networks, all of which have been proven to preserve important dynamical properties, namely, the capacity for 
nondegenerate multistationarity (multiple steady states) and periodic orbits.  
Both multistationarity and periodic orbits are properties that are known to be precluded when the deficiency (a nonnegative integer associated to a network) is zero.  
It is therefore natural to conjecture that the deficiency never decreases when any of the six aforementioned network operations are performed. 
We prove that this is indeed the case, and moreover, we characterize the numerical difference in deficiency after performing each network operation.

\vskip 0.1in

\noindent
{\bf Keywords:}
deficiency, reaction network, cyclomatic number

\noindent
{\bf MSC Codes:}
37N25,  
05C90, 
05C76, 
15A03 
\end{abstract}

\section{Introduction}
The aim of this work is to make a connection between two important research streams pertaining to dynamical systems arising from chemical reaction networks: (1) deficiency theory, which dates back to the work of Feinberg, Horn, and Jackson in the 1970s~\cite{Feinberg,Horn, horn_jackson_1972}; and (2) a collection of results that ``lift'' dynamical properties from smaller networks to larger ones~\cite{oscillatory, Banaji,species, nondegenerate,add-flow,ME_entrapped,Atoms}.  This second stream is more recent than the first, having begun only in the mid-2000s.  

To give more details, deficiency theory uses the ``deficiency'' of a network -- a nonnegative integer associated to a network which is computed by graph-theoretic and linear-algebraic means -- to serve as a link between the network structure and the dynamics arising from a network (via mass-action kinetics).  Roughly speaking, networks with low deficiency have simple dynamics.  The Deficiency Zero Theorem, for instance, rules out multistationarity (multiple positive steady states) and periodic orbits in networks with zero deficiency.  Consequently, many recent results 
have focused on recognizing general mass-action ODE systems as being ``dynamically equivalent'' to those arising from low-deficiency networks~\cite{craciun-uniqueness, haque-disguised, hernandez-framework,johnston-compute-w-rev}.

In contrast, the second research stream is interested in behavior that we noted above never occurs for deficiency-zero networks -- namely, nondegenerate multistationarity and periodic orbits (which are significant for networks arising in systems biology).  Indeed, so-called ``lifting'' (or ``inheritance'') results assert that such behaviors are preserved when chemical reaction networks are enlarged in certain ways.  

Such network operations include adding new reactions and/or new chemical species (under certain hypotheses).  Six of these operations, which we label E1--E6 (following Banaji), are described in Table~\ref{tab:summary-of-results} below, and (as summarized by Banaji) they represent essentially the best known ``lifting'' results~\cite{Banaji,banaji2023inheritance} (see Remark~\ref{rem:cappelletti} later in our work for a discussion of an additional operation).

To summarize, operations like E1--E6 allow nondegenerate multistationarity and periodic orbits (and, in fact, bifurcations~\cite{banaji2023inheritance})
to be ``lifted'', while deficiency-zero networks never exhibit such behaviors.  It is thus natural to conjecture that network operations that preserve nondegenerate behaviors never result in decreased  deficiency.  Our main result, as follows, asserts that this conjecture holds for the operations E1--E6, and moreover establishes the first link (to our knowledge) between the research streams of deficiency theory and ``lifting'' results:

\begin{thm}[Main result] \label{thm:main-result-summary}
    If $\NN'$ is a chemical reaction network obtained from a network $\NN$ by one of the operations E1--E6,
    then 
    \[ \delta({\NN'})\geq \delta({\NN})~,\]
    where $\delta(\NN)$ denotes the deficiency of $\NN$, and 
    the difference $\delta({\NN'}) - \delta({\NN})$ is given in Table~\ref{tab:summary-of-results}.
\end{thm}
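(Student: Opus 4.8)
The plan is to reduce the theorem to a bookkeeping computation on the three ingredients of the deficiency. Recall that a network with $c$ complexes, $\ell$ linkage classes, and stoichiometric subspace $S$ of dimension $s$ has deficiency $\delta = c - \ell - s$. Each operation E1--E6 builds $\NN'$ from $\NN$ by adjoining new species and/or reactions, possibly creating new complexes, so I would write
\[ \delta(\NN') - \delta(\NN) = \Delta c - \Delta\ell - \Delta s \]
and evaluate the three increments one at a time, for each operation in turn. The increments $\Delta c$ (new complexes) and $\Delta\ell$ (change in the number of connected components of the reaction graph) are purely combinatorial and can be read directly off the definition of each operation; the real content lies in controlling $\Delta s$, the change in the rank of the stoichiometric matrix.

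To make $\Delta s$ tractable uniformly across all six operations, I would first record a factorization of the stoichiometric map. Writing $Y$ for the complex matrix (one column per complex, equal to that complex's stoichiometric vector) and $\partial$ for the incidence map of the reaction graph, the stoichiometric map is $\Gamma = Y\partial$, so that $s = \operatorname{rank}\Gamma$ while $\operatorname{rank}\partial = c - \ell$. This gives the reformulations
\[ \delta \;=\; (c-\ell) - s \;=\; \dim\bigl(\ker Y \cap \operatorname{im}\partial\bigr) \;=\; \dim\ker\Gamma - \mu, \]
where $\mu = m - c + \ell$ is the cyclomatic number of the reaction graph on $m$ reactions. Their value is that they separate the graph-theoretic contribution to $\delta$ (cycles, captured by $\mu$ and $\ker\partial$) from the linear-algebraic one (relations among complexes annihilated by $Y$).

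With this in hand I would treat the six operations case by case. For operations that only adjoin reactions between existing complexes, $\Delta c = 0$, and the analysis splits according to whether the new reaction's endpoints already lie in a common linkage class: if they do, the new reaction vector already lies in $S$, forcing $\Delta\ell = \Delta s = 0$; if they do not, one weighs $\Delta\ell = -1$ against the at-most-unit increase in $s$. For operations that introduce a new species, the ambient space gains a dimension and $Y$ gains a row, so the crux is to show that the data added to $\Gamma$ raises $s$ by exactly the amount prescribed by how the new species participates in the new reactions; here the factorization $\Gamma = Y\partial$ lets me compute the rank change by tracking precisely which new cycles survive, and which are killed by, the enlarged $Y$.

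The step I expect to be the main obstacle is the exact --- not merely inequality --- determination of $\Delta s$ for the operations that add a species and one or more reactions at once, since there the stoichiometric rank can move in two competing directions: a genuine increase from the new species' reaction vectors, against the possible creation of new linear dependencies among the enlarged complex vectors. To pin down the exact difference recorded in Table~\ref{tab:summary-of-results}, I would use the fact that each such operation is designed (this is precisely why it preserves nondegenerate multistationarity and periodic orbits) so that the newly adjoined reaction vectors are independent modulo $S$ in a controlled way, making $\Delta s$ computable in closed form. The inequality $\delta(\NN') \ge \delta(\NN)$ then reduces, operation by operation, to checking $\Delta c - \Delta\ell \ge \Delta s$; equivalently, via the last reformulation above, each operation increases $\dim\ker\Gamma$ by at least as much as it increases the cyclomatic number $\mu$, which is the conceptual reason the deficiency can never drop.
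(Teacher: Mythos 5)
Your bookkeeping framework is the same as the paper's --- write $\Delta\delta = \Delta c - \Delta\ell - \Delta\operatorname{rk}$ and evaluate the three increments operation by operation --- and your reformulations $\delta = \dim(\ker Y \cap \operatorname{im}\partial) = \dim\ker\Gamma - \mu$ are correct (though the paper never needs them). The problem is that the proposal misplaces where the difficulty actually sits, and the step you wave through is exactly the one requiring a new idea. You assert that $\Delta c$ and $\Delta\ell$ ``are purely combinatorial and can be read directly off the definition of each operation.'' This is false for E3 and E4: inserting a new species $Y$ into some of the existing reactions can split one linkage class of $\NN$ into several linkage classes of $\NN'$ (e.g., a triangle of reversible reactions among $A,B,C$ can become two separate triangles, one on $A,B,C$ and one on $A+Y,B+Y,C+Y$), and neither $\Delta c$ nor $\Delta\ell$ is determined by the operation's definition --- both depend on which reactions receive $Y$. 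Moreover, in these cases the rank increments are the trivial part: $\Delta\operatorname{rk}=0$ is part of E3's definition, and $\Delta\operatorname{rk}=1$ for E4 is immediate from the block form of $\Gamma_{\NN'}$. So the entire content of E3/E4 is the inequality $\Delta\ell \le \Delta c$ (respectively $\Delta\ell + 1 \le \Delta c$), and your plan contains no argument for it. The paper proves it combinatorially: the linkage classes of $\NN'$ lying over a fixed linkage class of $\NN$ form a connected graph under the relation ``contain complexes with equal projections to the old species''; a spanning tree of that graph lifts to a forest on complexes, each of whose edges forces a distinct new complex, giving at least $k$ new complexes per $k$-fold split. Nothing in your proposal substitutes for this.

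Conversely, the step you flag as ``the main obstacle'' --- the exact determination of $\Delta s$ for species-adding operations --- is the easy part: for E5 and E6 the rank-$m$ condition together with the block-triangular shape of $\Gamma_{\NN'}$ gives $\Delta\operatorname{rk} = m$ at once. Worse, your justification for it (the operations are ``designed \dots precisely why [they] preserve nondegenerate multistationarity'') is circular: Banaji's preservation theorems are consequences of the rank hypotheses built into the operations' definitions, not a tool from which rank statements can be extracted. What is actually needed for E5 and E6, and what is missing from your plan, is the deduction in the opposite direction: using the rank-$m$ condition to control the \emph{graph} increments. For E5 one must show each added pair $C(j)\leftrightarrows D(j)$ contributes $\Delta c - \Delta\ell = 1$; the paper does this by showing $C(j)$ and $D(j)$ cannot already lie in a common linkage class, since otherwise $D(j)-C(j)$ would lie in the span of the preceding columns of $\Gamma_{\NN'}$, contradicting the rank-$m$ condition. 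For E6 one must show that the $m$ new midpoint complexes are pairwise distinct and each involves a new species (hence $\Delta c = m$ and $\Delta\ell = 0$); this holds because their projections to the new-species coordinates span the rank-$m$ row space of the relevant submatrix and so are nonzero and distinct. Without these three arguments --- the spanning-tree lemma for E3/E4 and the two rank-to-graph deductions for E5/E6 --- the plan does not close.
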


\begin{center}
    
\begin{table}[ht]

\begin{tabular}{c l c}
\hline
Operation & Description &  
    $\delta(\NN')- \delta(\NN)$ \\
\hline
\hyperlink{E1_def}{E1} & 
\makecell[l]{Add a new linearly dependent reaction} & 
\hyperlink{E1_thm}{$0$ or $1$} \\
\hyperlink{E2_def}{E2} & 
\makecell[l]{Add reactions $0 \leftrightarrows X_i$ for all species $X_i$} &

\hyperlink{E2_thm}{$\operatorname{rk}(\NN) + m_\NN + \widetilde \ell_\NN - s_\NN - 1$ }\\
\hyperlink{E3_def}{E3} & 
\makecell[l]{Add a new linearly dependent species} &
\hyperlink{E3_thm}{$\operatorname{cyc}(\NN) - \operatorname{cyc}(\NN')$} \\
\hyperlink{E4_def}{E4} & 
\makecell[l]{Add a new species $Y$ and the pair of \\ \quad reversible reactions $0 \leftrightarrows Y$}
& \hyperlink{E4_thm}{$\operatorname{cyc}(\NN) - \operatorname{cyc}(\NN') + 1$} \\

\hyperlink{E5_def}{E5} & 
\makecell[l]{Add reversible reactions with new species \\ \quad such that rank condition holds} &
\hyperlink{E5_thm}{$0$}\\

\hyperlink{E6_def}{E6} & 
\makecell[l]{Split reactions and add complexes involving \\ \quad new species with rank condition} &
\hyperlink{E6_thm}{$0$}\\
\hline

\end{tabular}
\caption{
Summary of results.  
Here, $\delta(\NN)$,
$\operatorname{rk}(\NN)$, and $s_\NN$ denote, respectively, the deficiency, rank, and number of species of a network $\NN$.
Also, $\widetilde \ell_{\NN}$ is the number of linkage classes in $\NN$ that contain 
an at-most-unimolecular complex (namely, $0$ or some $X_i$), while $m_{\NN}$ is the number of 
at-most-unimolecular complexes that are missing from $\NN$ (see Notation \ref{notation:special-complex}). 
Finally, $\operatorname{cyc}(\NN)$ is the cyclomatic number of $\NN$ (see Definition \ref{def:cyclo-deficiency}), and the rank condition is given in Definition~\ref{def:rank-condition}. 
For details, see Theorems~\ref{thm:E1},~\ref{thm:E2},~\ref{thm:E3},~\ref{thm:E4},~\ref{thm:E5},
and~\ref{thm:E6}.
\label{tab:summary-of-results}}
\end{table}
\end{center}
\vspace{-1cm}

The outline of our work is as follows. In Section~\ref{sec:background}, we define chemical reaction networks, deficiency, and the network operations E1--E6.  Our results are proven in Section~\ref{sec:results}, and we end with a discussion in Section~\ref{sec:discussion}.

\section{Background} \label{sec:background}
This section introduces chemical reaction networks and the concepts of deficiency and cyclomatic number (Section~\ref{sec:network}) and recalls Banaji's network operations E1--E6 (Section~\ref{sec:operations}).

\begin{notation}
 We let $\N$ denote the set of positive integers, and $\N_0$ the set of nonnegative integers. For $n \in \N$, we define the set $[n]:=\{1,2,\dots,n\}$.
 \end{notation}

\subsection{Chemical reaction networks} \label{sec:network}

\begin{defn}\label{def:chemical-reaction}
    A {\em chemical reaction network} (or {\em network}, for short) $\NN$ is a triple $(\XX_\NN, \CC_\NN,\RR_\NN)$ consisting of three finite, nonempty sets:
    \begin{itemize}
        \item a set of \textit{species} $\XX_\NN$ = $\{X_1, X_2,..., X_{s_\NN}\}$, where $s_{\NN}\in \N$,
        \item a set of {\em complexes} $\CC_\NN$ = $\{C_1, C_2, ..., C_{c_\NN}\}$, where $c_\NN \in \N$ and each $C_{i}$ is an $\N_0$-linear combination of species in $\XX_\NN$, and
        \item a set of \textit{reactions} $\RR_\NN
        \subseteq 
        \{C_i \to C_j \mid i,j \in [c_\NN] \text{ with } i \ne j\}$, such that each complex $C_k \in \CC_\NN$ takes part in at least one reaction in $\RR_\NN$.
    \end{itemize}
    Let  $r_{\NN} := |\RR_\NN|$ denote the number of reactions of $\NN$.  
\end{defn}

If $C_i \to C_j$ and $C_j \to C_i$ are both in $\RR_\NN$, then these reactions are \textit{reversible}
and the pair is denoted by $C_i \leftrightarrows C_j$. 
Reversible reactions of the form $0\leftrightarrows X_{i}$ are called \textit{inflow-outflow reactions}.

It is convenient to represent a network $\NN$
by a directed graph, 
where the vertices are complexes and the edges correspond to reactions.  
A connected component of this directed graph is a {\em linkage class}, and we let $\ell_\NN \in \N$ denote the number of linkage classes.

For convenience, we list some basic parameters of a network in Table~\ref{tab:notation}.

\begin{center}
    
\begin{table}[ht]

\begin{tabular}{c c c c c c c}
\hline
$s_\NN$ & $c_\NN$& $r_\NN$& $\ell_\NN$& $\operatorname{rk}(\NN)$ 
\\
\hline
\#  species &
\# complexes &
\# reactions & 
\# linkage classes &
rank 
\\
\hline
\end{tabular}
\caption{Parameters associated to a network $\NN$.  \label{tab:notation}}
\end{table}
\end{center}
\vspace{-1cm}
For ease of notation, our examples use $A,B,C, \dots$ for species, rather than $X_1,X_2,X_3,\dots$.

\begin{ex}[Running example] \label{ex:complexes} 
    The following directed graph represents the network $\NN = (\XX_\NN,\CC_\NN,\RR_\NN)$, where ${\XX_\NN = \{A,B\}}$, $\CC_\NN=\{2A,A+2B,2A+2B\}$, and $\RR_\NN = \{2A\to A+2B,~ A+2B \to 2A + 2B\}$:  
    \begin{align*} 
        2A \longrightarrow A+2B \longrightarrow 2A + 2B~.
    \end{align*}
This network $\NN$ has $\ell_\NN =1$ linkage classes.  This network is our running example for this section.
\end{ex}

\begin{defn} \label{def:cpx-vec} Consider a network $\NN = (\XX_\NN, \CC_\NN,\RR_\NN)$. 
\begin{enumerate}
    \item For a complex $C_i = \alpha_1 X_1 + \alpha_2 X_2 + \dots + \alpha_{s_\NN} X_{s_\NN}$, where $\alpha_i \in \N_0$ for all $i \in [s_\NN]$, 
    its {\em complex vector} is $(\alpha_1,\alpha_2, \dots, \alpha_{s_\NN})^{\intercal } \in \N_0^{s_\NN}$.  
    In a slight abuse of notation, we sometimes let $C_i$ denote its complex vector.
    \item The {\em reaction vector} of a reaction $C_i \to C_j \in \RR_\NN$ is $C_j - C_i\in \Z^{s_\NN}$.
    \item Given an ordering of the reactions $\RR_\NN = \{ R_1,R_2,\dots, R_{r_\NN} \}$, the {\em stoichiometric matrix} of $\NN$, denoted by  $\Gamma_\NN$, is the $(s_\NN \times r_\NN)$-matrix in which the $i$-th column is the reaction vector of $R_i$.
    \item The {\em stoichiometric subspace} of $\NN$ is the column space of $\Gamma_\NN$ (or, equivalently, the 
    span of the reaction vectors arising from all reactions of $\NN$), and the dimension of this subspace is the {\em rank} of $\NN$, which we denote by $\operatorname{rk}(\NN)$.
\end{enumerate}
\end{defn} 

\begin{ex}[Example~\ref{ex:complexes}, continued] \label{ex:running-example-continued}
For our running example, 
$\NN = \{A \to A+2B \to 2A + 2B\}$,
the complex vectors and the stoichiometric matrix are, respectively:
    $$\begin{bmatrix} 1 \\ 0 \end{bmatrix},
    \begin{bmatrix}1 \\ 2 \end{bmatrix},
    \begin{bmatrix}2 \\ 2\end{bmatrix}~,
    \quad \quad {\rm and} \quad \quad
    \Gamma_\NN =
    \begin{bmatrix}
        0 & 1 \\
        2 & 0
    \end{bmatrix}.$$
The two columns of $\Gamma_\NN $ are linearly independent, so 
$\operatorname{rk}(\NN)=2$.
\end{ex}

The following result is well known, and we include a proof for completeness.

\begin{lemma}\label{E5' cycles lemma}
    If complexes $C$
    and $D$ are in the same linkage class of a network $\NN$, then $D-C$ is in the stoichiometric subspace of $\NN$.
\end{lemma}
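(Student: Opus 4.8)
The plan is to exploit the definition of a linkage class as a connected component of the reaction graph, and then argue that connectivity gives us a path of reactions between $C$ and $D$ whose reaction vectors telescope to $D-C$. Since $C$ and $D$ lie in the same linkage class, by definition there is an undirected path in the reaction graph from $C$ to $D$; that is, there is a sequence of complexes $C = E_0, E_1, \dots, E_k = D$ such that for each consecutive pair $E_{t-1}, E_t$, either $E_{t-1} \to E_t$ or $E_t \to E_{t-1}$ is a reaction in $\RR_\NN$.

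The key observation is that in either orientation, the difference $E_t - E_{t-1}$ is plus or minus a reaction vector, hence lies in the stoichiometric subspace (the span of all reaction vectors). Specifically, if $E_{t-1} \to E_t \in \RR_\NN$, then $E_t - E_{t-1}$ is exactly the reaction vector of that reaction; if instead $E_t \to E_{t-1} \in \RR_\NN$, then $E_t - E_{t-1}$ is the negative of that reaction vector. Since the stoichiometric subspace is a linear subspace, it is closed under negation, so in both cases $E_t - E_{t-1}$ belongs to it.

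First I would write the telescoping sum
\[
    D - C \;=\; E_k - E_0 \;=\; \sum_{t=1}^{k} (E_t - E_{t-1})~.
\]
Each summand lies in the stoichiometric subspace by the observation above, and since a subspace is closed under addition, the whole sum lies in it as well. This completes the argument.

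There is no serious obstacle here; the result is essentially bookkeeping once connectivity is unpacked. The only point requiring a little care is the handling of edge orientation — making explicit that an edge traversed against its direction contributes the negative of a reaction vector, which is still in the subspace because subspaces are closed under scalar multiplication. A minor degenerate case worth noting is $C = D$, where the path has length zero and $D - C = 0$ lies trivially in the subspace.
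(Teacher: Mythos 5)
Your proof is correct and follows essentially the same approach as the paper's: take an undirected path of complexes from $C$ to $D$, observe that each step's difference is a reaction vector or its negative (hence in the stoichiometric subspace), and telescope the sum to get $D-C$. The only difference is your explicit mention of the degenerate case $C=D$, which the paper handles implicitly.
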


\begin{proof}
Assume that $C$ and $D$ are in the same linkage class of $\NN$. Then there exists a path of complexes $C_1 = C, ~C_2, ~C_3,\dots, C_q= D$, such that, for all $j\in[q-1]$, at least one of the reactions $C_j \to C_{j+1}$ and $C_j \leftarrow C_{j+1}$ is a reaction of $\NN$.
Hence, for all $j\in[q-1]$, the vector $C_{j+1} - C_{j}$ or its negative is a reaction vector of $\NN$, and so
 $C_{j+1} - C_{j}$
is in the stoichiometric subspace of $\NN$.  Hence, the sum 
$\sum_{j=1}^{q-1} \left(C_{j+1} - C_{j} \right) = D-C$
is in the stoichiometric subspace of $\NN$.
\end{proof}

\begin{defn}\label{def:cyclo-deficiency}
    The \textit{cyclomatic number} and  
    \textit{deficiency} of a network $\NN$ are, respectively, 
    $$\operatorname{cyc}(\NN) ~:=~ r_\NN - c_\NN + \ell_\NN
    \quad \quad  {\rm and } \quad \quad 
	\delta(\NN) ~:=~ c_\NN - \ell_\NN - \operatorname{rk}(\NN),$$
where $r_{\NN}$, $c_\NN$, $\ell_\NN$, and $\operatorname{rk}(\NN)$ are as in Table~\ref{tab:notation}.
\end{defn}

\begin{remark}[Cyclomatic numbers] \label{rem:cyclomatic} 
The cyclomatic number of a network $\NN$ 
comes from graph theory: 
it is the so-called cyclomatic number of the directed graph associated to $\NN$.  
This cyclomatic number of a graph $G=(V,E)$, also called the circuit rank, 
represents the number of independent  cycles in $G$ or (equivalently) the minimum number of edges that must be removed to break all cycles~\cite[Chapter~4]{Berge2001}.  The cyclomatic number also captures the dimension of the first homology group: $H_{1}\left(G\right)\cong \Z^{|E|-|V|+|C|}$, where $C$ is the set of connected components of $G$.  
In our setting, $G$ is the graph associated to $\NN$, so we identify 
$|E|, |V|$, and $|C|$, with 
$r_{\NN}, c_{\NN}$, and 
$\ell_{N}$, respectively.
\end{remark}

\begin{ex}[Example~\ref{ex:running-example-continued}, continued] \label{ex:cyclomatic}
    For our running example, 
    $\NN = \{A \to A+2B \to 2A + 2B\}$,
the cyclomatic number  is  $\operatorname{cyc}(\NN) = 2 - 3 + 1 = 0$, which is consistent with Remark~\ref{rem:cyclomatic}, as $\NN$ has no cycles.  
    The deficiency of $\NN$ is $\delta(\NN) = 3 - 1 - 2 = 0$.
\end{ex}

\subsection{Network operations}
\label{sec:operations}

In this subsection, we recall the network operations\footnote{Banaji calls these operations ``enlargements'', which is why they are labeled $E1$--$E6$.} investigated by Banaji~\cite{Banaji} (Definition~\ref{def:ops}).  

\begin{defn}[Adding a new species, splitting reactions] \label{def:split}
Let $\NN$ be a network.  
\begin{enumerate}
    \item Consider a reaction of $\NN$: 
    \begin{align} \label{eq:reaction-before-add-species}
            a_1X_1 + a_2X_2 + \dots + a_{s_\NN}X_{s_\NN} ~ \longrightarrow ~b_1X_1 + b_2X_2 + \dots + b_{s_\NN}X_{s_\NN}~.
    \end{align}
    By replacing the reaction~\eqref{eq:reaction-before-add-species} by one of the following form:
    \begin{align*} 
    a_1X_1 + a_2X_2 + \dots + a_{s_\NN}X_{s_\NN} + c_1Y ~ \longrightarrow ~ b_1X_1 + b_2X_2 + \dots + b_{s_\NN}X_{s_\NN} + c_2Y~,
    \end{align*}
    for some $c_1, c_2 \in \N_0$ -- where at least one of $c_1,c_2$ is nonzero --
    we obtain the network $\NN'$ from $\NN$ by {\em adding a new species} $Y$ to reaction~\eqref{eq:reaction-before-add-species}.
    \item 
    Let $C_i \to C_j$ be a reaction of $\NN$.  The network $\NN'$ obtained from $\NN$ by {\em splitting} the reaction $C_i \to C_j$ 
    arises from replacing 
    $C_i \to C_j$ with the reactions $C_i \to D$ and $D \to C_j$, where $D$ is a new complex (that is, with $D \notin \CC_\NN$).  

\end{enumerate}    
\end{defn}

\begin{defn}[Rank condition] 
\label{def:rank-condition}
Let $m \in \mathbb{N}$. Consider a network $\NN'$ that is obtained in some way from a network $\NN$ and involves at least $m$ new species (here, new species are species that are not in $\XX_\NN$).  
Let $\widetilde \Gamma_{\NN'}$ denote the submatrix of $\Gamma_{\NN'}$ formed by the row(s) corresponding to the new species.  The {\em rank-$m$ condition} is satisfied if $\widetilde \Gamma_{\NN'}$ has rank $m$.
\end{defn}

\begin{defn}[Operations] \label{def:ops} 
Denote by $\NN'$ the network obtained from a network $\NN$ after one of the following {\em operations}. 
\begin{enumerate}
    \item[\hypertarget{E1_def}{E1.}] Add to $\RR_\NN$ a new reaction involving only species in $\XX_\NN$ such that $\operatorname{rk}(\NN) =\operatorname{rk}(\NN')$.
    
    \item[\hypertarget{E2_def}{E2.}] Add to $\RR_\NN$ the inflow-outflow reactions for all species in $\XX_\NN$.
    
    \item[\hypertarget{E3_def}{E3.}] Add a new species $Y$ into some or all of the existing reactions of $\NN$ such that $\operatorname{rk}(\NN) =\operatorname{rk}(\NN')$. 
    
    \item[\hypertarget{E4_def}{E4.}] Add a new species $Y$ into some or all of the existing reactions of $\NN$, and add the inflow-outflow reactions $0\leftrightarrows Y$.
    
    \item[\hypertarget{E5_def}{E5.}] Add $m$ new pairs of reversible reactions to $\NN$ and $m+i$ new species only in the new reactions, for some $m \in \N$, $i \in \N_0$, such that the rank-$m$ condition holds (as in Definition~\ref{def:rank-condition}).
    
    \item[\hypertarget{E6_def}{E6.}] Split $m$ existing reactions of $\NN$, 
    such that the new complexes added involve $m+i$ new species, for some $m  \in \N$, $i \in \N_0$, 
    such that the rank-$m$ condition holds (as in Definition~\ref{def:rank-condition}).
\end{enumerate}
\end{defn}

\begin{remark}[E5] \label{rem:reversible-reactions-stoic-matrix}
The network operation E5 consists of adding $m$ pairs of reversible reactions. In this case, the stoichiometric matrix of $\NN'$ has the following block upper-triangular form:
\begin{align} \label{eq:rank-condition}
\Gamma_{\NN'}
~=~
\left(
	\begin{array}{ccc|ccc}
	&&&    * & \dots & * \\	
	& \Gamma_{\NN} & & * &  & *   \\
	&& & \vdots & & \vdots \\	
    \cline{1-3} 
	0 & \dots &  0 &  \vdots &    &\vdots  \\
	\vdots & \ddots &  \vdots &	* &  & * \\
    0 & \dots & 0 & * & \dots & * \\
	\end{array}
\right)~,
\end{align}
where the last rows correspond to the new species and the last $2m$ columns, indicated by $*$'s, correspond to the $2m$ new reactions (which come in $m$ pairs).  
For each pair, the corresponding two reaction vectors are negatives of each other, so it suffices (for the rank condition) to consider a version of the stoichiometric matrix $\Gamma_{\NN'}$ in which only one reaction vector is added for each pair. 
\end{remark}

\begin{remark} \label{rem:preserve-nondeg}
    As mentioned in the Introduction, 
    all six operations E1--E6 have been proven to preserve nondegenerate multistationarity and periodic orbits~\cite{Banaji}, as well as bifurcations~\cite{banaji2023inheritance}.
\end{remark}
 
\begin{ex}[E1; Example~\ref{ex:cyclomatic}, continued] \label{ex:E1-deficiency}
Recall that the network $\{      2A \rightarrow A+2B \rightarrow 2A + 2B\}$
has rank $2$.  Adding any reaction in the species $A$ and $B$ preserves the rank and so is an E1 operation.  
For instance, adding the reaction $2A + 2B \to 3A + 3B$ yields:
    \begin{align*}
            \NN' ~=~
        \{
        2A \rightarrow A+2B \rightarrow 2A + 2B  ~{\color{blue} \rightarrow 3A + 3B}
        \}~.
    \end{align*} 
Then $\delta (\NN') = 4-1-2=1$.
\end{ex}

\begin{ex}[E2] \label{ex:E2}
Consider the network $\NN = \{A \to 2C,~2D \leftarrow C \to B \}$.  The operation E2 adds inflow-outflow reactions, which results in the following network, which we denote by $\NN'$:

\begin{center}
\begin{tikzcd}
    & \textcolor{blue}{D} \arrow[r,shift right,blue]  & \textcolor{blue}{0} \arrow[d,shift right,blue]\arrow[dl,shift right=0.5ex,blue]\arrow[l,shift right,blue] \arrow[r,shift right,blue] & \arrow[l,shift right,blue] A \arrow[r] & 2C \\
    2D & \arrow[l] C\arrow[ur,shift right=0.6ex,blue]\arrow[r] & B\arrow[u,shift right,blue] & &
\end{tikzcd}
\end{center}
The deficiencies are $\delta(\NN)=5-2-3=0$ and $\delta(\NN')=7-1-4=2$.
\end{ex}

\begin{ex}[E3] \label{ex:E3} 
The following shows a network $\NN$ and a network $\NN'$ obtained after an E3 operation, in which a new species $D$ is added to two of the reactions:
    \begin{center}
        \includegraphics[width=12cm]{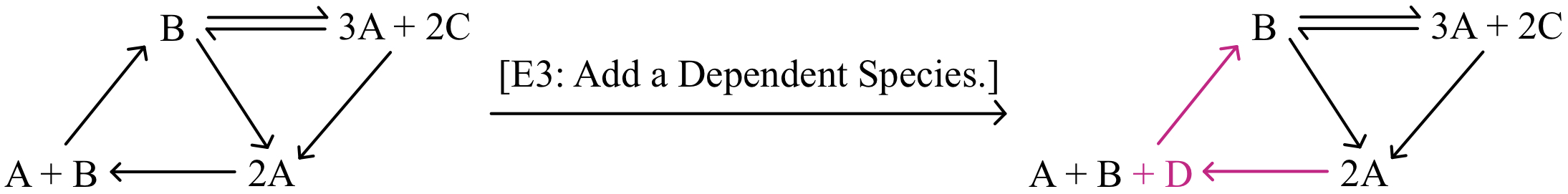}
    \end{center}
It is straightforward to check that both networks have rank $3$. The deficiencies and cyclomatic numbers are as follows:
$\delta(\NN) =\delta(\NN') = 4 - 1 - 3 = 0$ 
and 
$\operatorname{cyc}(\NN)=\operatorname{cyc}(\NN') = 6 - 4 + 1 = 3$.  
\end{ex}

\begin{ex}[E4] \label{ex:E4}
Consider the network $\NN=\{A \leftrightarrows B \}$, which has deficiency $\delta(\NN)=2-1-1=0$ and cyclomatic number $\operatorname{cyc}(\NN)=2-2+1=1$. 
We apply the E4 operation in which the species $C$ is added to the reaction $A\rightarrow B$ to obtain  $A \to B+C$.
The resulting network is $\NN'=\{
A \to B + {\color{blue} C},~
B \to A, ~ {\color{blue}0 \leftrightarrows C} \}$.  
The deficiency of $\NN' $ is 
$\delta(\NN')=5-2-2=1$, while the cyclomatic number is $\operatorname{cyc}(\NN')=4-5+2=1$.
\end{ex}

\begin{ex}[E5] \label{ex:E5}
Consider the network $\NN=\{A\to B,~ 2B \to 2A\}$.  
After applying an E5 operation in which two pairs of reversible reactions are added, we obtain the following network: 
    $\NN'=\{A\to B{\color{blue}~\leftrightarrows C+D},~ 2B \to 2A {\color{blue}~\leftrightarrows D+E} \}$.
A version of the stoichiometric matrix of $\NN'$ is as follows (where only one reaction for each pair is included, as in Remark \ref{rem:reversible-reactions-stoic-matrix}):
    $$\Gamma_{\NN'} ~=~ 
    \begin{bmatrix}
        -1 & 2 & 0 & -2 \\
        1 & -2 & -1 & 0  \\
        0 & 0 & 1 & 0\\
        0 & 0 & 1 & 1 \\
        0 & 0 & 0 & 1
    \end{bmatrix}.$$
The last three rows (corresponding to the new species $C,D,E$) form a rank-2 submatrix (as required by E5).  The deficiencies of the networks are $\delta(\NN)=4-2-1=1$ and $\delta(\NN')=6-2-3=1$.
\end{ex}

\begin{ex}[E6]  \label{ex:E6}
Consider the network $\NN = \{ 2A \to A+B \leftarrow 2B \}$.
We apply operation E6 in which both reactions are split and new complexes involving new species $C$ and $D$ are added:
\[
    \NN' ~=~ \{ 2A ~{\color{blue} \to A+C \to } ~ A+B ~{\color{blue} \leftarrow 2C+D \leftarrow } ~ 2B \}~.
\]

The stoichiometric matrices of these networks are as follows:

\begin{align*}
        \Gamma_{\NN} ~=~ \begin{bmatrix}
        -1 & 1 \\
        1 & -1
    \end{bmatrix}
\quad {\rm and}
\quad
        \Gamma_{\NN'} ~=~ \begin{bmatrix}
       -1 &  0 &  1 & 0  \\
        0 &  1 &  1 & -2  \\
        1 & -1 & -2 & 2  \\
        0 &  0 & -1 & 1 
    \end{bmatrix}~.
\end{align*}
As required for an E6 operation, the last two rows of $\Gamma_{\NN'}$ (corresponding to $C,D$) form a rank-2 submatrix.  
Notice also that 
$\operatorname{rk}(\NN) = 1$, $\operatorname{rk}(\NN') = 3$, 
$\delta(\NN) = 3 - 1 - 1 = 1$, and $\delta(\NN') = 5 - 1 - 3 = 1$. 
\end{ex}

\subsection{A more general rank condition} \label{sec:general-rank-condition}  
The operation E5 requires the rank-$m$ condition, that is, the rows of $\Gamma_{\NN'}$ corresponding to the new species must have rank $m$.  It is natural to consider the generalization of this condition in which we ask that the columns corresponding to new reactions (rather than the rows corresponding to new species) have rank $m$.  (The fact that this condition generalizes the original condition arises from the block triangular structure of $\Gamma_{\NN'}$, in~\eqref{eq:rank-condition}.)  

This new condition defines a generalization of the operation E5, which we denote by E5$'$.  If a network $\NN'$ is obtained from a network $\NN$ by performing an E5$'$  operation, then $\delta(\NN') \geq \delta(\NN)$ (this follows from Lemma~\ref{lem:add-reactions} in the next section).  In this way, E5$'$ is similar to (what we prove in the next section about) E1--E6. 

However, unlike E1--E6~\cite{Banaji}, the operation E5$'$ has not been proven to preserve nondegenerate multistationarity or periodic orbits.  In fact, E5 does not, in general, preserve nondegenerate multistationarity.  The following example illustrates this fact.

\begin{ex} \label{ex:E5-prime}
The network  $\NN = \{A \to B,~ 2A+B \to 3A\} $ exhibits nondegenerate multistationarity~\cite{JoshiShiu}, 
but 
the network 
$\NN' = \{A \to B,~ 2A+B \to 3A,~ 
{\color{blue} C \leftrightarrows A+C} \}$, 
which is obtained from $\NN$ by an E5$'$ operation, does not (this is a straightforward computation). 
\end{ex}
\noindent
As for (nondegenerate) periodic orbits, we conjecture that E5$'$ similarly does not, in general, preserve such behavior.

\section{Results}  \label{sec:results}

This section contains our results for the operations E1--E6.  We make use of the following notation. 
\begin{notation}[Change in a parameter] \label{notation:change}
    For  networks $\NN$ and $\mathcal M$, and parameter $\theta$ of networks, define
    $$\Delta \theta(\NN, \mathcal M) ~:=~ \theta( \mathcal M ) - \theta (\NN).$$
    The relevant parameters defined so far are $s_\NN$, $c_\NN$, $r_\NN$, $\ell_\NN$, $\operatorname{rk}(\NN)$, $\operatorname{cyc}(\NN)$, and $\delta(\NN)$. 
\end{notation}

Using Notation~\ref{notation:change}, the 
 following formulas for the changes in deficiency and cyclomatic number are immediate from Definition~\ref{def:cyclo-deficiency}: 
    \begin{align*}
        \change{\delta}{\NN}{\mathcal{M}} &~=~
    \change{c}{\NN}{\mathcal{M}} - \change{\ell}{\NN}{\mathcal{M}} - \change{\operatorname{rk}}{\NN}{\mathcal{M}}~, \quad {\rm and}  \\
        \change{\operatorname{cyc}}{\NN}{\mathcal{M}} &~=~ 
        \change{r}{\NN}{\mathcal{M}} - \change{c}{\NN}{\mathcal{M}} + \change{\ell}{\NN}{\mathcal{M}}~.    
    \end{align*}
We use the above equalities frequently in our proofs below.  

\subsection{Three useful lemmas} \label{sec:3-lemmas}

The following result pertains to operations that add reactions.

\begin{lemma}[Adding reactions]\label{lem:add-reactions} 
Let $C$ and $D$ be complexes.
Consider a network $\NN'$ that is obtained from a network $\NN$ by adding the reaction $C \to D$ (where $C\to D$ is not a reaction of $\NN$) or the pair of reversible reactions $C \leftrightarrows D$ (where neither $C\to D$ nor $C \leftarrow D$ is a reaction of $\NN$). Then:
\begin{enumerate}
    \item $\change{\delta}{\NN}{\NN'}=0$ or $\change{\delta}{\NN}{\NN'}=1$, and
    \item If no linkage class of $\NN$ contains both 
    $C$ and $D$ (for instance, if at least one of $C$ and $D$ is not a complex of $\NN$),
    then $\change{c}{\NN}{\NN'}-\change{\ell}{\NN}{\NN'} = 1$.  
\end{enumerate}
\end{lemma}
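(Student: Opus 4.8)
The plan is to work directly from the deficiency formula, writing
\[
\change{\delta}{\NN}{\NN'} ~=~ \change{c}{\NN}{\NN'} - \change{\ell}{\NN}{\NN'} - \change{\operatorname{rk}}{\NN}{\NN'}~,
\]
and to control each of the three changes separately. The rank change is the easiest to pin down: adding the single reaction $C \to D$ appends the one reaction vector $D-C$ to a generating set of the stoichiometric subspace, while adding the reversible pair $C \leftrightarrows D$ appends $D-C$ and $C-D$, which span the same line. Since $C \neq D$, in either case $\change{\operatorname{rk}}{\NN}{\NN'} \in \{0,1\}$, and it equals $0$ precisely when $D-C$ already lies in the stoichiometric subspace of $\NN$. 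The bulk of the argument is then a bookkeeping of $\change{c}{\NN}{\NN'} - \change{\ell}{\NN}{\NN'}$, which I would treat by a short case analysis on how the complexes $C$ and $D$ sit relative to $\NN$.

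Because adding a reversible pair affects the complexes and the connectivity of the associated directed graph in exactly the same way as adding the single edge $C \to D$ (the reaction count $r_\NN$ enters none of $c_\NN$, $\ell_\NN$, or $\operatorname{rk}(\NN)$), I can treat both operations at once, purely graph-theoretically. There are four mutually exclusive cases: (i) $C$ and $D$ are both complexes of $\NN$ lying in the same linkage class; (ii) both are complexes of $\NN$ but in distinct linkage classes; (iii) exactly one of $C,D$ is a new complex; and (iv) both are new. A direct count gives $\bigl(\change{c}{\NN}{\NN'},\,\change{\ell}{\NN}{\NN'}\bigr) = (0,0)$ in case (i), $(0,-1)$ in case (ii), $(1,0)$ in case (iii), and $(2,1)$ in case (iv). Hence $\change{c}{\NN}{\NN'} - \change{\ell}{\NN}{\NN'} = 0$ in case (i) and $=1$ in cases (ii)--(iv); in other words, this difference vanishes exactly when $C$ and $D$ lie in a common linkage class of $\NN$, and equals $1$ otherwise. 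This already yields Part (2): its hypothesis excludes case (i), so $\change{c}{\NN}{\NN'} - \change{\ell}{\NN}{\NN'} = 1$.

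For Part (1), I combine the two ingredients via $\change{\delta}{\NN}{\NN'} = \bigl(\change{c}{\NN}{\NN'} - \change{\ell}{\NN}{\NN'}\bigr) - \change{\operatorname{rk}}{\NN}{\NN'}$, where the parenthesized term is $0$ or $1$ and the rank change is $0$ or $1$. The only configuration that could push this outside $\{0,1\}$ is $\change{c}{\NN}{\NN'} - \change{\ell}{\NN}{\NN'} = 0$ together with $\change{\operatorname{rk}}{\NN}{\NN'} = 1$, which would give $\change{\delta}{\NN}{\NN'} = -1$. This is precisely the step I expect to be the crux, and it is exactly where Lemma~\ref{E5' cycles lemma} is needed: the difference $\change{c}{\NN}{\NN'} - \change{\ell}{\NN}{\NN'}$ vanishes only in case (i), where $C$ and $D$ lie in the same linkage class of $\NN$, and that lemma then forces $D-C$ into the stoichiometric subspace of $\NN$, so in fact $\change{\operatorname{rk}}{\NN}{\NN'} = 0$. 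With this bad configuration ruled out, case (i) gives $\change{\delta}{\NN}{\NN'} = 0$ and each of cases (ii)--(iv) gives $\change{\delta}{\NN}{\NN'} = 1 - \change{\operatorname{rk}}{\NN}{\NN'} \in \{0,1\}$, completing Part (1).
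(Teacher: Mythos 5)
Your proposal is correct and follows essentially the same route as the paper's own proof: the identical four-way case analysis on whether $C$ and $D$ are old or new complexes and whether they share a linkage class, with Lemma~\ref{E5' cycles lemma} invoked to force $\change{\operatorname{rk}}{\NN}{\NN'}=0$ in the one case where $\change{c}{\NN}{\NN'}-\change{\ell}{\NN}{\NN'}=0$. The only (cosmetic) difference is that you unify the single-reaction and reversible-pair cases at the outset and state the potential $\change{\delta}{\NN}{\NN'}=-1$ obstruction explicitly, which the paper handles implicitly inside its final case.
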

\begin{proof} 
As one reaction or one reversible pair is added, the rank either is unchanged or increases by $1$ (that is, $\change{\operatorname{rk}}{\NN}{\NN'} \in \{0,1\}$).
    We consider the following cases:

\uline{Case 1}:
$C$ and $D$ are both new complexes ($C, D \notin \CC_\NN$).  In this case, $\change{c}{\NN}{\NN'}=2$, and the reaction $C \to D$ (or $C \leftrightarrows D$) creates a new linkage class (so, $\change{\ell}{\NN}{\NN'} = 1$). 
Hence, $\change{c}{\NN}{\NN'}-\change{\ell}{\NN}{\NN'} = 2-1=1$. 
Finally, $\change{\delta}{\NN}{\NN'}=\change{c}{\NN}{\NN'} - \change{\ell}{\NN}{\NN'} - \change{\operatorname{rk}}{\NN}{\NN'}=2-1-\change{\operatorname{rk}}{\NN}{\NN'}$ and so $\change{\delta}{\NN}{\NN'}$ equals $0$ or $1$.

\uline{Case 2}: $C$ is a new complex and $D$ is not ($C \notin \CC_\NN$, $D \in \CC_\NN$), or vice-versa. In this case, $\change{c}{\NN}{\NN'}=1$, and adding the reaction $C \to D$ (or $C \leftrightarrows D$) simply enlarges one linkage class (and no linkage classes are joined or created).  Hence, $\change{\ell}{\NN}{\NN'} = 0$
and so $\change{c}{\NN}{\NN'}-\change{\ell}{\NN}{\NN'} = 1-0=1$.  Finally, 
$\change{\delta}{\NN}{\NN'}=1-0-\change{\operatorname{rk}}{\NN}{\NN'}$ and so $\change{\delta}{\NN}{\NN'}$ equals $0$ or $1$.
        
\uline{Case 3}: $C, D \in \CC_\NN$, but $C$ and $D$ are in distinct linkage classes of $\NN$.
In this case, $\change{c}{\NN}{\NN'}=0$.  Also, adding the reaction $C\to D$ (or $C \leftrightarrows D$) joins two linkage classes, so $\change{\ell}{\NN}{\NN'} = -1$.  Hence, 
$\change{c}{\NN}{\NN'}-\change{\ell}{\NN}{\NN'} = 0-(-1)=1$.
Finally, we have 
$\change{\delta}{\NN}{\NN'}=0-(-1)-\change{\operatorname{rk}}{\NN}{\NN'}$ and so $\change{\delta}{\NN}{\NN'}$ equals $0$ or $1$.

\uline{Case 4}: $C, D \in \CC_\NN$, and $C$ and $D$ are in the same linkage class of $\NN$. 
We have $\change{c}{\NN}{\NN'}=0$ and $\change{\ell}{\NN}{\NN'} =0$.  Next, $\change{\operatorname{rk}}{\NN}{\NN'}=0$ by Lemma~\ref{E5' cycles lemma}.  Hence, $\change{\delta}{\NN}{\NN'}=\change{c}{\NN}{\NN'} - \change{\ell}{\NN}{\NN'} - \change{\operatorname{rk}}{\NN}{\NN'}=0-0-0=0$.
\end{proof}

\begin{remark} \label{rem:cappelletti}
Lemma~\ref{lem:add-reactions} implies that operations that consist of adding reactions (e.g., E1, E2, and E5) never decrease the deficiency.  
Another instance of such an operation is when some (but possibly not all) inflow reactions $0 \to X_i$ and outflow reactions $X_i \to 0$ are added. Results on which such operations preserve nondegenerate multistationarity were proven recently by Cappelletti, Feliu, and Wiuf~\cite[Theorem 8]{add-flow}.  However, it is unknown whether their operations preserve (nondegenerate) periodic orbits.  We conjecture that they do.
\end{remark}

The following example motivates Lemma \ref{E34 nondecreasing lem}.

\begin{ex}[E3] \label{E3 splits}
Consider the following network $\NN$:
\begin{center}
    \begin{tikzcd}
        & C \arrow[dl, shift right] \arrow[dr, shift right] & \\
        A \arrow[ur,shift right] \arrow[rr,shift right]& & B\arrow[ul, shift right] \arrow[ll,shift right]
    \end{tikzcd}
\end{center}
The following network $\NN'$ is obtained from $\NN$ by applying an E3 operation (adding species $D$ to three reactions without changing the rank):
\begin{center}
	\begin{tikzpicture}[scale=1.3]
	\draw [->] (0,0) -- (1,0);
	\draw [->] (1.1,0.1) -- (0.6,.65);
	\draw [<-] (-0.1,0.1) -- (0.4,.65);
    \node [left] at (0,0) {$A$};
    \node [right] at (1,0) {$B$};
    \node [above] at (0.5,.7) {$C$};
	\draw [<-] (3,0) -- (4,0);
	\draw [<-] (4.1,0.1) -- (3.6,.65);
	\draw [->] (2.9,0.1) -- (3.4,.65);
    \node [left] at (3,0) {$A+D$};
    \node [right] at (4,0) {$B+D$};
    \node [above] at (3.5,.7) {$C+D$};
	\end{tikzpicture}
\end{center}  
Notice that the single linkage class of $\NN$ ``breaks'' into two linkage classes in $\NN'$.  This idea appears in the proof of the next lemma.
\end{ex}

\begin{lemma}[E3 and E4]\label{E34 nondecreasing lem} Consider a network $\NN$.
\begin{enumerate}
    \item  If $\NN'$ is a network obtained from~$\NN$ 
    by applying an E3  operation, then 
    $\change{\ell}{\NN}{\NN'} \leq \change{c}{\NN}{\NN'}$. 
    \item  If $\NN'$ is a network obtained from~$\NN$ 
    by applying an E4  operation, 
    then 
    $\change{\ell}{\NN}{\NN'} +1  \leq \change{c}{\NN}{\NN'}$. 
\end{enumerate}
\end{lemma}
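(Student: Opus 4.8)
The plan is to analyze both operations through the single underlying move of \emph{adding a species $Y$ to a chosen set of reactions}, tracking how it splits complexes and linkage classes; E4 is then the same species-addition followed by adjoining the pair $0 \leftrightarrows Y$.

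First I would record the effect of adding $Y$ to a set of reactions. Each complex $C$ of $\NN$ reappears in the new network as the complexes $C + sY$, where $s$ runs over the finite set $S_C \subseteq \N_0$ of distinct $Y$-coefficients with which $C$ occurs across all (modified and unmodified) reactions; since distinct complexes of $\NN$ stay distinct and no two reactions are identified, we get $\change{r}{\NN}{\NN'} = 0$ and $\change{c}{\NN}{\NN'} = \sum_C (|S_C| - 1) \geq 0$. Graph-theoretically, the directed graph of $\NN'$ arises from that of $\NN$ by a sequence of \emph{vertex splits}: each vertex $C$ is split into $|S_C|$ vertices, with its incident edges partitioned according to the $Y$-coefficient. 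The key elementary fact is that splitting one vertex into $t$ vertices raises $c$ by exactly $t-1$ and raises $\ell$ by at most $t-1$ (every linkage class avoiding that vertex is untouched, and the vertex's own class breaks into at most $t$ pieces). Summing over all complexes proves the first part: $\change{\ell}{\NN}{\NN'} \leq \sum_C(|S_C|-1) = \change{c}{\NN}{\NN'}$. Note the rank hypothesis in E3 is never used for this inequality.

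For the second part I would write E4 as this species-addition $\NN \to \NN''$ followed by the addition of $0 \leftrightarrows Y$ to reach $\NN'$. The first part already gives $\change{c}{\NN}{\NN''} - \change{\ell}{\NN}{\NN''} \geq 0$. For the reversible pair, if $0$ and $Y$ do not lie in a common linkage class of $\NN''$, then Lemma~\ref{lem:add-reactions}(2) gives $\change{c}{\NN''}{\NN'} - \change{\ell}{\NN''}{\NN'} = 1$, so the two contributions sum to at least $1$, which is exactly $\change{\ell}{\NN}{\NN'} + 1 \leq \change{c}{\NN}{\NN'}$.

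The one remaining possibility --- that $0$ and $Y$ share a linkage class of $\NN''$, where the reversible pair contributes only $0$ (Case 4 of Lemma~\ref{lem:add-reactions}) --- is the crux, and I expect it to be the main obstacle. In that situation both $0$ and $Y$ must arise from splitting the single complex $0 \in \CC_\NN$, since $Y \notin \XX_\NN$ forces the complex $0$ to be the only parent of either complex; in particular $\{0,1\} \subseteq S_0$. To recover the missing $+1$, I would refine the count from the first part by carrying out the split of the complex $0$ \emph{last}: at that step the graph of $\NN''$ is reached, and since two of its pieces ($0$ and $Y$) share a linkage class, the class being split breaks into at most $|S_0|-1$ pieces instead of $|S_0|$. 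That split therefore raises $\ell$ by at most $|S_0|-2$ while raising $c$ by $|S_0|-1$, so it alone contributes at least $1$ to $\change{c}{\NN}{\NN''} - \change{\ell}{\NN}{\NN''}$, while the other splits contribute at least $0$. Hence $\change{c}{\NN}{\NN''} - \change{\ell}{\NN}{\NN''} \geq 1$, and adding the (now possibly $0$) contribution of $0 \leftrightarrows Y$ again yields $\change{\ell}{\NN}{\NN'} + 1 \leq \change{c}{\NN}{\NN'}$.
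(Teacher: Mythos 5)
Your proof is correct, and its combinatorial core is genuinely different from the paper's. For part~1, the paper works one linkage class $L$ of $\NN$ at a time: it forms an auxiliary graph whose vertices are the linkage classes $L_1,\dots,L_{k+1}$ of $\NN'$ lying over $L$, takes a spanning tree $T$ with $k$ edges, and lifts each edge of $T$ to a pair of complexes with the same image under the projection $\pi$; the resulting forest $H$ on $\CC_{\NN'}$ has each component contained in a single fiber of $\pi$, hence containing at most one ``old'' complex, which certifies enough ``new'' complexes to give $\change{\ell}{\NN}{\NN'} \leq \change{c}{\NN}{\NN'}$. You instead realize the graph of $\NN'$ from that of $\NN$ by a sequence of vertex splits (one old complex at a time, with incident edges redistributed according to the $Y$-coefficient) and use the local fact that splitting a vertex into $t$ copies raises the complex count by exactly $t-1$ and the number of linkage classes by at most $t-1$. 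Your route is more elementary --- no spanning trees or auxiliary graphs --- and it yields the exact formula $\change{c}{\NN}{\NN'}=\sum_C \left( |S_C|-1 \right)$ as a bonus, whereas the paper's argument localizes the count per linkage class of $\NN$. For part~2, both proofs use the identical reduction: write E4 as a rank-condition-free E3 followed by adjoining $0 \leftrightarrows Y$, split into cases according to whether $0$ and $Y$ share a linkage class of the intermediate network, and invoke Lemma~\ref{lem:add-reactions} in the case where they do not (both proofs also correctly note that part~1 never uses the rank hypothesis, which is what makes this reduction legitimate). The difference is how the extra $+1$ is extracted in the hard case: the paper adjoins the edge $(0,Y)$ to its forest $H$, observes it creates no cycle, and thereby certifies one additional new complex; you split the old complex $0$ last and observe that, since the copies $0$ and $Y$ land in a common component of the final graph, that single split creates at most $|S_0|-2$ new linkage classes against exactly $|S_0|-1$ new complexes. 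Your version of this step is airtight --- the key facts (that $0$ must be a complex of $\NN$ and $\{0,1\}\subseteq S_0$) are verified explicitly, and the order-independence of the splits lets you legitimately defer the split of $0$ to the end --- so the whole lemma rests on one reusable local counting fact rather than on the construction of auxiliary graphs.
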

\begin{proof} 
We begin by proving part~1. Consider 
the map  $\pi: \CC_{\NN'} \to  \CC_{\NN}$ 
that restricts a complex to only the ``old'' species, 
that is, $\pi$ is 
defined by 
$(a_1X_1 + a_2X_2 + \dots + a_{s_\NN}X_{s_\NN} + a_Y Y) \mapsto (a_1X_1 + a_2X_2 + \dots + a_{s_\NN}X_{s_\NN})$. 
By definition of E3, 
$\pi$ is surjective and gives rise to another map, a bijection $\widetilde \pi: \RR_{\NN'} \to  \RR_{\NN}$ defined by $(C\to D) \mapsto (\pi(C) \to \pi(D)) $.  In words, $\widetilde \pi$ sends a reaction to the reaction it was before E3 added the species $Y$ to some complexes.

It is now straightforward to see that each linkage class $L$ of $\NN$ corresponds to one or more linkage classes $L_1,L_2,\dots, L_{k+1}$ of $\NN'$ (see Example \ref{E3 splits}).  
Thus, we can analyze one linkage class of $\NN$ at a time.
Accordingly, assume that $L$ is a linkage class of $\NN$ that corresponds to linkage classes $L_1,L_2,\dots, L_{k+1}$ of $\NN'$ (where $k \geq 0$).  It suffices to show that 
the number of ``new'' complexes (i.e., those not in $\CC_\NN$) in the linkage classes $L_1,L_2,\dots, L_{k+1}$ is at least $k$. 

To this end, define a graph $G$, as follows: the vertices are the linkage classes $L_1,L_2,\dots, L_{k+1}$, and the (undirected) edge $(L_i,L_j)$ exists when there exist complexes $C\in L_i$ and $D \in L_j$ such that $\pi(C)=\pi(D)$.  It is straightforward to see that the graph $G$ is connected; this uses the fact that the linkage classes $L_1,L_2,\dots, L_{k+1}$ of $\NN'$ correspond to a single linkage class (namely, $L$) of $\NN$.  Hence, $G$ has a spanning tree $T$ with $k$ edges.

Next, we use the tree $T$ to define the edges of a graph $H$ with vertex set $\CC_{\NN'}$.  
The (undirected) edges of $H$ arise as follows:
for each of the $k$ edges of $T$, which we denote by some $(L_i,L_j)$, pick 
 complexes $C\in L_i$ and $D \in L_j$ such that $\pi(C)=\pi(D)$, and then define $(C,D)$ to be an edge of $H$ (note that $C \neq D$, as 
 $C$ and $D$ are in distinct linkage classes $L_i$ and $L_j$, respectively).  It follows that $H$ (like $T$) has no cycles, and so is a forest with $k$ edges.  
Also, all vertices $C$ in a fixed (but arbitrary) connected component of $H$ correspond to the same ``old'' complex (that is, $\pi(C)$ is the same for all such vertices).  Hence, each connected component of $H$ 
contains at most one ``old'' complex.
So, a component 
 of $H$
 with $\ell \le k$ edges --- and hence $\ell+1$ vertices since the component is a tree --- will involve at least $\ell$ ``new'' complexes.  
 Therefore,
 the vertices incident to the $k$ edges of $H$ include at least $k$ ``new'' complexes, which completes the proof of part~1.

Next, we prove part~2. By definition, every E4 operation is obtained by a generalized E3 operation -- in which the rank condition $\operatorname{rk}(\NN) =\operatorname{rk}(\NN')$ need not be satisfied -- followed by adding the reactions $0 \leftrightarrows Y$. Notice that the proof of part 1 does not use this rank condition. 
As a result, the proof of part 1 also applies to the generalized E3 operation. Let $\widetilde \NN$ denote the network after this generalized E3 operation (i.e., before adding  $0 \leftrightarrows Y$).  We consider two cases:

\uline{Case 1}: There exists a linkage class of $\widetilde \NN$ containing $0$ and $Y$ as complexes. 
In this case, $\change{\ell }{\NN}{\NN'} =\change{\ell}{\NN}{\widetilde \NN} $.  Next, we define $\widetilde H$ to be the graph obtained by adding the edge $(0,Y)$ to the graph $H$ (which we constructed in the proof we gave above for part~1), 
where $H$ comes from the linkage class of $\NN$ that contains $0$.  As $0$ and $Y$ are in the same linkage class of $\widetilde \NN$ (and hence in the same linkage class of $\NN'$), the edge $(0,Y)$ was not an edge of $H$ (by construction) and additionally is not part of a cycle in $\widetilde H$.  Moreover, $\pi(0)=\pi(Y)=0$.  So,
the edge
$(0,Y)$ (more precisely, the vertices incident to that edge) generates one more ``new'' complex than what was found in part~1. 
This fact yields the inequality here:
\begin{align*}
    \change{c}{\NN}{\NN'} 
    ~\geq~
    1+ \change{\ell}{\NN}{\widetilde \NN}
    ~=~
    1+ \change{\ell}{\NN}{\NN'}~,
\end{align*}
and the equality here was proven above.

\uline{Case 2}:
No linkage class of $\widetilde \NN$ contains both $0$ and $Y$. 
In this case, Lemma~\ref{lem:add-reactions} implies that $\change{c}{\widetilde \NN}{\NN'} - \change{\ell}{\widetilde \NN}{\NN'}=1$.  This equality and part~1 (applied to the pair $(\NN, \widetilde \NN)$) together imply the desired inequality, as follows:
\begin{align*} 
    \change{\ell}{\NN}{\NN'}
    ~&=~
    \change{\ell}{\NN}{\widetilde \NN} + \change{\ell}{\widetilde \NN}{\NN'}  \\
    ~& \leq ~
    \change{c}{\NN}{\widetilde \NN} + \change{c}{\widetilde \NN}{\NN'} - 1
    \\
    ~&=~\change{c}{ \NN}{\NN'} - 1~.\qedhere
\end{align*}
\end{proof}

\begin{remark} \label{rem:lem-does-not-require-rank}
    In the proof of part~1 of Lemma~\ref{E34 nondecreasing lem}, 
    the rank condition of operation E3 is not used.  Hence, that part of the lemma allows for versions of E3 without the rank condition.
\end{remark}

Our final lemma 
pertains to block triangular matrices, which we later apply to stoichiometric matrices that satisfy the rank-$m$ condition.

\begin{lemma} \label{lem:block-upper-triangular}
    Consider a block upper-triangular matrix of size $(p+q) \times (u+v)$ with entries in $\mathbb{R}$:
\begin{align*} 
M
~=~
\left(
    \begin{array}{c|c}
        A & B \\
        \hline
        {\bf 0} & C
    \end{array}
\right)~,
\end{align*}
where ${\bf 0}$ represents a $(q \times u)$ zero matrix,
and
$A$, $B$, and $C$ have size $(p \times u)$, $(p \times v)$, and $(q \times v)$, respectively.  If  
$\operatorname{rk}(C)=v$, then $\operatorname{rk}(M) = \operatorname{rk}(A) + \operatorname{rk}(C)$.
\end{lemma}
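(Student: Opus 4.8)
The plan is to analyze the column space of $M$ directly, using the projection onto the last $q$ coordinates together with rank-nullity. Write the columns of $M$ in two groups: the first $u$ columns have the form $(a_j, \mathbf{0})^{\intercal}$, where $a_j \in \mathbb{R}^p$ is the $j$-th column of $A$ and $\mathbf{0} \in \mathbb{R}^q$; the last $v$ columns have the form $(b_k, c_k)^{\intercal}$, where $b_k$ and $c_k$ are the $k$-th columns of $B$ and $C$, respectively. Let $V \subseteq \mathbb{R}^{p+q}$ denote the column space of $M$, so that $\operatorname{rk}(M) = \dim V$.

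Next I would introduce the projection $\pi : \mathbb{R}^{p+q} \to \mathbb{R}^q$ onto the last $q$ coordinates and restrict it to $V$. Rank-nullity for $\pi|_V$ gives $\dim V = \dim \pi(V) + \dim(V \cap \ker \pi)$. Evaluating $\pi$ on the spanning columns yields $\pi\big((a_j, \mathbf{0})^{\intercal}\big) = \mathbf{0}$ and $\pi\big((b_k, c_k)^{\intercal}\big) = c_k$, so $\pi(V)$ is exactly the column space of $C$; hence $\dim \pi(V) = \operatorname{rk}(C) = v$.

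It then remains to show $\dim(V \cap \ker \pi) = \operatorname{rk}(A)$, and this is the step where the hypothesis $\operatorname{rk}(C)=v$ is essential. Since $\ker \pi = \{(x, \mathbf{0})^{\intercal} : x \in \mathbb{R}^p\}$, I claim $V \cap \ker \pi = \{(x, \mathbf{0})^{\intercal} : x \in \operatorname{col}(A)\}$, which has dimension $\operatorname{rk}(A)$. The inclusion $\supseteq$ is immediate, as each $(a_j, \mathbf{0})^{\intercal}$ lies in $V$. For $\subseteq$, suppose $(x, \mathbf{0})^{\intercal} = \sum_j \alpha_j (a_j, \mathbf{0})^{\intercal} + \sum_k \beta_k (b_k, c_k)^{\intercal}$; reading off the last $q$ coordinates gives $\sum_k \beta_k c_k = \mathbf{0}$, and because $\operatorname{rk}(C)=v$ forces the columns $c_1, \dots, c_v$ to be linearly independent, every $\beta_k = 0$, leaving $x = \sum_j \alpha_j a_j \in \operatorname{col}(A)$.

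Combining the two dimension computations gives $\operatorname{rk}(M) = \dim V = v + \operatorname{rk}(A) = \operatorname{rk}(C) + \operatorname{rk}(A)$, as desired. The crux — and the one place I expect any subtlety — is the forcing of $\beta_k = 0$: without the full-column-rank hypothesis on $C$, the intersection $V \cap \ker \pi$ could be strictly larger than $\operatorname{col}(A) \times \{\mathbf{0}\}$ (for example if $A = C = \mathbf{0}$ but $B \neq \mathbf{0}$), and the claimed identity would fail. Everything else is a routine application of linearity of $\pi$ and rank-nullity.
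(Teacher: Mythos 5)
Your proof is correct, but it takes a genuinely different route from the paper's. The paper argues by row reduction: since $\operatorname{rk}(C)=v$, the row space of $C$ is all of $\mathbb{R}^v$ (row rank equals column rank), so every row of $B$ can be cleared by adding suitable linear combinations of the bottom $q$ rows; this converts $M$ into a block diagonal matrix with blocks $A$ and $C$ without changing the rank, giving $\operatorname{rk}(M)=\operatorname{rk}(A)+\operatorname{rk}(C)$ in one step. You instead work intrinsically with the column space $V$ of $M$, applying rank--nullity to the restriction of the projection $\pi$ onto the last $q$ coordinates, and computing $\pi(V)=\operatorname{col}(C)$ and $V\cap\ker\pi=\{(x,\mathbf{0})^{\intercal} : x\in\operatorname{col}(A)\}$. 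Notably, the two arguments exploit the hypothesis $\operatorname{rk}(C)=v$ in dual ways: the paper needs the \emph{rows} of $C$ to span $\mathbb{R}^v$ (to make the elimination of $B$ possible), whereas you need the \emph{columns} of $C$ to be linearly independent (to force every $\beta_k=0$). The paper's proof is shorter, though it leaves the justification for the row operations implicit; yours is longer but makes completely explicit where full column rank enters, and your counterexample ($A=C=\mathbf{0}$, $B\neq\mathbf{0}$) cleanly demonstrates that the conclusion fails without that hypothesis --- a point the paper's two-line proof does not address.
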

\begin{proof}
    Since $\operatorname{rk}(C)=v$, we perform row operations on $M$ so that the entries of submatrix $B$ become zeros.
  The resulting matrix is a block diagonal matrix in which one block is $A$ and the other is a $q\times v$ submatrix of rank $v$. Hence, as operations do not affect the rank, we obtain $\operatorname{rk}(M) = \operatorname{rk}(A) + \operatorname{rk}(C)$.
\end{proof}

\subsection{E1}
Recall that the operation E1 adds a new reaction without changing the rank of the network (Definition~\ref{def:ops}).  
We also saw an example in which an E1 operation increases a network's deficiency by $1$ 
(Example~\ref{ex:E1-deficiency}).  The following result states that such an increase always occurs, except in the situation when the new reaction joins two complexes that already were in the same linkage class.

\begin{thm}[\hypertarget{E1_thm}{Deficiency after E1}]
\label{thm:E1}
    Let $\NN'$ be a network obtained from a network $\NN$ by applying an E1 operation.
    Let $C \to D$ denote the reaction added in this operation.
    If there exists a linkage class of $\NN$ that contains both $C$ and $D$ (in particular, the operation adds no new complexes), 
    then $$\change{\delta}{\NN}{\NN'} ~=~ 0.$$ 
    Otherwise,
    $\change{\delta}{\NN}{\NN'} = 1.$
\end{thm}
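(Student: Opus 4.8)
The plan is to reduce everything to Lemma~\ref{lem:add-reactions} together with the defining feature of E1 that the rank is unchanged. Recall from Notation~\ref{notation:change} and Definition~\ref{def:cyclo-deficiency} the change formula
\[
\change{\delta}{\NN}{\NN'} ~=~ \change{c}{\NN}{\NN'} - \change{\ell}{\NN}{\NN'} - \change{\operatorname{rk}}{\NN}{\NN'}~.
\]
Since an E1 operation adds a single reaction $C \to D$ and, by definition, satisfies $\operatorname{rk}(\NN) = \operatorname{rk}(\NN')$, we have $\change{\operatorname{rk}}{\NN}{\NN'} = 0$. Hence the entire problem collapses to determining the quantity $\change{c}{\NN}{\NN'} - \change{\ell}{\NN}{\NN'}$, which is exactly what Lemma~\ref{lem:add-reactions} controls.

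For the \emph{otherwise} case, in which no linkage class of $\NN$ contains both $C$ and $D$, I would apply part~2 of Lemma~\ref{lem:add-reactions} directly. Its hypothesis is precisely that no single linkage class of $\NN$ contains both $C$ and $D$ (this covers the situations in which at least one of $C, D$ is a new complex, or both are old complexes lying in distinct linkage classes), and its conclusion is $\change{c}{\NN}{\NN'} - \change{\ell}{\NN}{\NN'} = 1$. Combined with $\change{\operatorname{rk}}{\NN}{\NN'} = 0$, the change formula then yields $\change{\delta}{\NN}{\NN'} = 1$.

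For the first case, in which some linkage class of $\NN$ contains both $C$ and $D$, I would observe that this is exactly Case~4 in the proof of Lemma~\ref{lem:add-reactions}: both $C$ and $D$ are already complexes of $\NN$ (so the operation adds no new complexes) and they lie in a common linkage class. Consequently $\change{c}{\NN}{\NN'} = 0$ and $\change{\ell}{\NN}{\NN'} = 0$, and the change formula gives $\change{\delta}{\NN}{\NN'} = 0 - 0 - 0 = 0$.

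I do not expect any genuine obstacle here, since all the combinatorial content is already carried by Lemma~\ref{lem:add-reactions}. The only point requiring care is bookkeeping: one must verify that the two alternatives in the theorem partition exactly according to whether a single linkage class contains both $C$ and $D$, so that the \emph{otherwise} case aligns with the hypothesis of part~2 of the lemma while the first case aligns with Case~4 of its proof. It is also worth noting that rank-preservation is the defining property of E1, which is what makes $\change{\operatorname{rk}}{\NN}{\NN'} = 0$; we therefore need not re-derive this via Lemma~\ref{E5' cycles lemma}, although the value $0$ is consistent with the rank argument appearing inside Case~4 of the proof of Lemma~\ref{lem:add-reactions}.
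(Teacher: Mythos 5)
Your proposal is correct and matches the paper's own proof: both reduce the problem to the change formula $\change{\delta}{\NN}{\NN'} = \change{c}{\NN}{\NN'} - \change{\ell}{\NN}{\NN'} - \change{\operatorname{rk}}{\NN}{\NN'}$ with $\change{\operatorname{rk}}{\NN}{\NN'}=0$ by the definition of E1, handle the shared-linkage-class case by directly noting $\change{c}{\NN}{\NN'} = \change{\ell}{\NN}{\NN'} = 0$, and dispatch the remaining case via Lemma~\ref{lem:add-reactions}. No gaps.
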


\begin{proof} By definition, the rank is unchanged by E1: $\change{\operatorname{rk}}{\NN}{\NN'} = 0$. 
We consider two cases.  First, assume that 
there exists a linkage class of $\NN$ that contains both $C$ and $D$.  In this case, $\change{c}{\NN}{\NN'}=0$ and $\change{\ell}{\NN}{\NN'} =0$, so $\change{\delta}{\NN}{\NN'}=\change{c}{\NN}{\NN'} - \change{\ell}{\NN}{\NN'} - \change{\operatorname{rk}}{\NN}{\NN'}=0-0-0=0$.  In the remaining case, Lemma~\ref{lem:add-reactions} implies that $\change{c}{\NN}{\NN'} - \change{\ell}{\NN}{\NN'}=1$ and so 
$\change{\delta}{\NN}{\NN'}= (\change{c}{\NN}{\NN'} - \change{\ell}{\NN}{\NN'}) - \change{\operatorname{rk}}{\NN}{\NN'}=1-0=1$.
\end{proof}

\begin{ex} \label{ex:E1-that-preserves-deficiency}
Theorem~\ref{thm:E1} implies that when an E1 operation makes a non-reversible reaction reversible, the deficiency is preserved.  For instance, the network $\NN = \{ A+B \to 2A\}$
transforms under such an E1 operation to $ \NN' = \{A+B \leftrightarrows 2A\}$, and both networks have deficiency $0$.
\end{ex}

\subsection{E2}
Recall that the operation E2 adds inflow-outflow reactions for all species 
(Definition~\ref{def:ops}).  To state our result on E2, we must give a name to the types of complexes added by E2, as follows.

\begin{defn} \label{def:special-complex}
    A complex is {\em at-most-unimolecular} (for short, {\em unimolecular}) if it is $0$ or $X_i$, for some $i$.  
\end{defn}

\begin{notation}[$m_\NN$ and $\widetilde \ell_\NN$] \label{notation:special-complex}
    Let $\NN$ be a network.
    \begin{enumerate}
        \item Let $m_\NN$ be the number of unimolecular complexes that are ``missing'' from $\NN$ (more precisely, the number of complexes of the form $0$ or $X_i$, with $i \in [s_\NN]$, that are not in $\CC_\NN$).
       \item Let $\widetilde \ell_\NN$ denote the number of linkage classes of $\NN$ that contain at least one unimolecular complex.
    \end{enumerate}
\end{notation}

\begin{ex}[Example~\ref{ex:E2}, continued] \label{ex:E2-part-2}
The network $\NN = \{A \to 2C,~2D \leftarrow C \to B \}$ has $m_\NN=2$ ``missing'' unimolecular complexes (namely, $0$ and $D$), and both linkage classes contain unimolecular complexes (so, $\widetilde \ell_\NN=2$).
\end{ex}

Our main result concerning E2 is as follows.
\begin{thm}[\label{E2_thm}\hypertarget{E2_thm}{Deficiency after E2}]
 \label{thm:E2}
If $\NN'$ is a network obtained from a network $\NN$ by applying the E2 operation, then $\change{\delta}{\NN}{\NN'} \ge 0$ and
    $$\change{\delta}{\NN}{\NN'} ~=~ 
    \operatorname{rk}(\NN)  - s_\NN + m_\NN + \widetilde \ell_\NN- 1~,$$
where the notation is as in Table~\ref{tab:notation} and Notation~\ref{notation:special-complex}.    
\end{thm}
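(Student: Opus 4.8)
The plan is to compute the three changes $\change{c}{\NN}{\NN'}$, $\change{\ell}{\NN}{\NN'}$, and $\change{\operatorname{rk}}{\NN}{\NN'}$ separately and then assemble them with the identity $\change{\delta}{\NN}{\NN'} = \change{c}{\NN}{\NN'} - \change{\ell}{\NN}{\NN'} - \change{\operatorname{rk}}{\NN}{\NN'}$ recorded after Notation~\ref{notation:change}. Two of these are quick. Since E2 requires all $s_\NN+1$ unimolecular complexes to be present and exactly $m_\NN$ of them are missing from $\NN$, the operation adds precisely $m_\NN$ new complexes, so $\change{c}{\NN}{\NN'} = m_\NN$. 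Moreover, the reactions $0 \to X_i$ contribute the reaction vectors $e_1,\dots,e_{s_\NN}$, whose span is all of $\R^{s_\NN}$; hence $\operatorname{rk}(\NN')=s_\NN$ and $\change{\operatorname{rk}}{\NN}{\NN'} = s_\NN - \operatorname{rk}(\NN)$.

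The subtle count is $\change{\ell}{\NN}{\NN'}$. After E2 the complex $0$ is joined by a reaction to every $X_i$, so all $s_\NN+1$ unimolecular complexes lie in a single linkage class of $\NN'$. I would argue that every linkage class of $\NN$ containing at least one unimolecular complex is therefore absorbed into this single class (via its unimolecular complex), while every linkage class of $\NN$ containing no unimolecular complex is untouched (E2 adds no reaction touching it). This gives $\ell_{\NN'} = (\ell_\NN - \widetilde\ell_\NN) + 1$, so $\change{\ell}{\NN}{\NN'} = 1 - \widetilde\ell_\NN$. Substituting the three changes yields $\change{\delta}{\NN}{\NN'} = m_\NN - (1-\widetilde\ell_\NN) - (s_\NN - \operatorname{rk}(\NN)) = \operatorname{rk}(\NN) - s_\NN + m_\NN + \widetilde\ell_\NN - 1$, the claimed formula.

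The remaining assertion $\change{\delta}{\NN}{\NN'}\ge 0$ is the main obstacle. Writing $u := s_\NN + 1 - m_\NN$ for the number of unimolecular complexes actually present in $\NN$ and substituting into the formula reduces the inequality to $\operatorname{rk}(\NN) \ge u - \widetilde\ell_\NN$. To prove this I would exhibit $u - \widetilde\ell_\NN$ linearly independent vectors in the stoichiometric subspace of $\NN$: group the unimolecular complexes of $\NN$ according to the linkage class containing them, obtaining $\widetilde\ell_\NN$ nonempty groups with $u$ complexes in total; in each group fix a base complex and take the differences of the other complexes in that group with the base. By Lemma~\ref{E5' cycles lemma} each such difference lies in the stoichiometric subspace, and there are $u - \widetilde\ell_\NN$ of them.

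The crux is that these $u - \widetilde\ell_\NN$ difference vectors are linearly independent. This rests on the fact that the unimolecular complex vectors $0, e_1, \dots, e_{s_\NN}$ are affinely independent: in coordinates, the chosen differences are exactly the signed incidence vectors of a forest on the set of unimolecular complexes (one star per linkage-class group), and the incidence vectors of any acyclic edge set among affinely independent points are linearly independent. This gives $\operatorname{rk}(\NN) \ge u - \widetilde\ell_\NN$ and hence $\change{\delta}{\NN}{\NN'} \ge 0$. I expect this independence step — the forest/incidence argument — to be the one requiring genuine care, since the rest is bookkeeping with Definition~\ref{def:cyclo-deficiency}.
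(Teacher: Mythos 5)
Your proof is correct, and the formula part of it is essentially the paper's proof: both compute $\change{c}{\NN}{\NN'} = m_\NN$, $\change{\operatorname{rk}}{\NN}{\NN'} = s_\NN - \operatorname{rk}(\NN)$, and $\change{\ell}{\NN}{\NN'} = 1 - \widetilde\ell_\NN$ (the paper splits this last count into the cases $\widetilde\ell_\NN = 0$ and $\widetilde\ell_\NN \geq 1$, which your absorption argument handles uniformly), and then assemble. Where you genuinely diverge is the inequality $\change{\delta}{\NN}{\NN'} \geq 0$: the paper disposes of it in one line, observing that E2 is a sequence of single-reaction (or reversible-pair) additions and invoking Lemma~\ref{lem:add-reactions}, which says each such addition changes the deficiency by $0$ or $1$ --- so no formula is needed and the inequality is independent of the rest of the proof. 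You instead deduce the inequality from the formula itself, reducing it to the rank bound $\operatorname{rk}(\NN) \geq u - \widetilde\ell_\NN$ with $u = s_\NN + 1 - m_\NN$, and you prove that bound by exhibiting $u - \widetilde\ell_\NN$ differences of unimolecular complexes, each lying in the stoichiometric subspace by Lemma~\ref{E5' cycles lemma}, and linearly independent because $0, e_1, \dots, e_{s_\NN}$ are affinely independent and your differences are the edge vectors of a forest (a disjoint union of stars). That independence step is sound: in a vanishing linear combination of the edge vectors, the coefficient attached to each point sums to zero over all points, affine independence forces every point's coefficient to vanish, and acyclicity (prune leaves) then forces every edge coefficient to vanish. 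The trade-off: the paper's route is shorter and modular, reusing a lemma that serves several theorems; your route costs more work but yields a standalone structural fact about $\NN$ alone --- the rank of any network is at least the number of unimolecular complexes it contains minus the number of linkage classes containing one --- which is a mildly interesting byproduct the paper does not record.
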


\begin{proof} 
    First, $\change{\delta}{\NN}{\NN'} \geq 0$ follows from repeated application of
    Lemma~\ref{lem:add-reactions}.
    Next, $\change{c}{\NN}{\NN'} = m_{\NN}$, because the complexes that E2 adds are precisely the $m_{\NN}$ ``missing'' complexes. 
       Also, $\operatorname{rk}(\NN') = s_\NN$, 
       as $\NN'$ contains all inflow-outflows; hence, 
    $\change{\operatorname{rk}}{\NN}{\NN'} = s_\NN - \operatorname{rk}(\NN)$. 
    
     Now we claim that $\change{\ell}{\NN}{\NN'} = 1- \widetilde \ell_\NN$.  To prove this claim, we consider two cases.  First, if $\widetilde{\ell}_{\NN} =0$, then E2 adds exactly one new linkage class (consisting of all unimolecular complexes), so 
    $\change{\ell}{\NN}{\NN'}= 1=1 - \widetilde{\ell}_{\NN}$.  On the other hand, if $\widetilde{\ell}_{\NN} \geq 1$, then  E2 connects the $\widetilde{\ell}_{\NN}$ linkage classes containing unimolecular complexes into one linkage class, and so $\change{\ell}{\NN}{\NN'} =
    - \widetilde{\ell}_{\NN} +1$.

We now can compute $\change{\delta}{\NN}{\NN'}$:
    \begin{align} \label{eq:change-E2}
    \notag
        \change{\delta}{\NN}{\NN'} &~=~ \change{c}{\NN}{\NN'} - \change{\ell}{\NN}{\NN'} - \change{\operatorname{rk}}{\NN}{\NN'} \\
               &~=~ m_{\NN} - (1-\widetilde \ell_\NN ) - (s_{\NN} - \operatorname{rk}(\NN)) ~.
    \end{align}
Finally, rearranging the terms in~\eqref{eq:change-E2} yields exactly the desired expression for $\change{\delta}{\NN}{\NN'}$.
\end{proof}

\begin{ex}[Example~\ref{ex:E2-part-2}, continued] \label{ex:E2-part-3}
Recall that for the network $\NN = \{A \to 2C,~2D \leftarrow C \to B \}$ and the network $\NN'$ obtained from the E2 operation, we have $\change{\delta}{\NN}{\NN'} =2$.  This is consistent with Theorem~\ref{thm:E2}, as $\operatorname{rk}(\NN)  - s_\NN + m_\NN + \widetilde \ell_\NN- 1= 3 - 4 + 2 + 2 - 1 = 2$.
\end{ex}

\subsection{E3}
Recall that operation E3 adds a new species without changing the network's rank (Definition~\ref{def:ops}).  
Also, we presented an example in which an E3 operation leaves unchanged both the deficiency and the cyclomatic number of a network (Example~\ref{ex:E3}).  
The following result shows that when the deficiency does change, it must increase and, moreover, its change is exactly opposite the change in the cyclomatic number.

\begin{thm}[\hypertarget{E3_thm}{Deficiency after E3}]
 \label{thm:E3}
 If $\NN'$ is a network obtained from a network~$\NN$ by applying an E3 operation, then $\change{\delta}{\NN}{\NN'} \ge 0$ and
    $$\change{\delta}{\NN}{\NN'} ~=~ 
    - \change{\operatorname{cyc}}{\NN}{\NN'} ~.$$
\end{thm}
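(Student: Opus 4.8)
The plan is to read off the equality directly from the definitions and then reduce the inequality to Lemma~\ref{E34 nondecreasing lem}. Adding the two definitions in Definition~\ref{def:cyclo-deficiency} gives the identity $\delta(\NN) + \operatorname{cyc}(\NN) = r_\NN - \operatorname{rk}(\NN)$, since the $c_\NN$ and $\ell_\NN$ terms cancel. Passing to differences (Notation~\ref{notation:change}) yields
$$\change{\delta}{\NN}{\NN'} + \change{\operatorname{cyc}}{\NN}{\NN'} ~=~ \change{r}{\NN}{\NN'} - \change{\operatorname{rk}}{\NN}{\NN'}~.$$
So the equality in the theorem will follow as soon as I show the right-hand side vanishes.

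First I would observe that an E3 operation changes neither the number of reactions nor the rank. By definition, E3 only inserts the new species $Y$ into \emph{existing} reactions, adding no reaction and deleting none; this is exactly the content of the bijection $\widetilde\pi : \RR_{\NN'} \to \RR_\NN$ used in the proof of Lemma~\ref{E34 nondecreasing lem}, so $\change{r}{\NN}{\NN'} = 0$. And the defining rank condition of E3 says $\operatorname{rk}(\NN) = \operatorname{rk}(\NN')$, i.e.\ $\change{\operatorname{rk}}{\NN}{\NN'} = 0$. Substituting both into the displayed identity gives $\change{\delta}{\NN}{\NN'} = -\change{\operatorname{cyc}}{\NN}{\NN'}$, which is the asserted equality.

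For the inequality $\change{\delta}{\NN}{\NN'} \geq 0$, I would expand the deficiency difference as $\change{\delta}{\NN}{\NN'} = \change{c}{\NN}{\NN'} - \change{\ell}{\NN}{\NN'} - \change{\operatorname{rk}}{\NN}{\NN'}$. Using $\change{\operatorname{rk}}{\NN}{\NN'} = 0$ from the previous step, this collapses to $\change{\delta}{\NN}{\NN'} = \change{c}{\NN}{\NN'} - \change{\ell}{\NN}{\NN'}$, and the claimed nonnegativity is then \emph{exactly} the conclusion of Lemma~\ref{E34 nondecreasing lem}(1), namely $\change{\ell}{\NN}{\NN'} \leq \change{c}{\NN}{\NN'}$.

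I do not expect a serious obstacle at this stage, because the genuinely delicate part has already been dispatched in Lemma~\ref{E34 nondecreasing lem}: tracking how a single linkage class of $\NN$ can fragment into several linkage classes $L_1,\dots,L_{k+1}$ of $\NN'$ once $Y$ is inserted, and bounding below (via the spanning-tree/forest argument) the number of new complexes such fragmentation forces. The only thing to be careful about here is the bookkeeping of which parameters E3 fixes — that $r$ and $\operatorname{rk}$ are both unchanged — so that the $\delta + \operatorname{cyc}$ identity and the Lemma~\ref{E34 nondecreasing lem}(1) bound combine cleanly.
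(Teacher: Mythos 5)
Your proposal is correct and takes essentially the same approach as the paper: both proofs reduce everything to the two facts $\change{r}{\NN}{\NN'}=0$ and $\change{\operatorname{rk}}{\NN}{\NN'}=0$, derive the equality $\change{\delta}{\NN}{\NN'}=-\change{\operatorname{cyc}}{\NN}{\NN'}$ purely from Definition~\ref{def:cyclo-deficiency}, and obtain nonnegativity from $\change{\delta}{\NN}{\NN'}=\change{c}{\NN}{\NN'}-\change{\ell}{\NN}{\NN'}$ together with Lemma~\ref{E34 nondecreasing lem}(1). The only difference is cosmetic: you package the algebra as the identity $\delta+\operatorname{cyc}=r-\operatorname{rk}$ before passing to differences, whereas the paper chains the same equalities directly.
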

\begin{proof}
By definition, E3 preserves the rank of the network ($\change{\operatorname{rk}}{\NN}{\NN'} = 0$) 
and adds species to reactions, but does not change the number of reactions ($\change{r}{\NN}{\NN'} = 0$).  We compute the change in deficiency:
    \begin{align} \label{eq:proof-E3}
        \notag
        \change{\delta}{\NN}{\NN'} 
        &~=~ \change{c}{\NN}{\NN'} - \change{\ell}{\NN}{\NN'} - \change{\operatorname{rk}}{\NN}{\NN'}  \\
        &~=~ \change{c}{\NN}{\NN'} - \change{\ell}{\NN}{\NN'} \\
        \notag
        &~=~ - \left( \change{r}{\NN}{\NN'} - \change{c}{\NN}{\NN'} + \change{\ell}{\NN}{\NN'} \right)  \\
        \notag
        &~=~ -\change{\operatorname{cyc}}{\NN}{\NN'}.
    \end{align}
Finally, the inequality $\change{\delta}{\NN}{\NN'} \ge 0$
follows from equation~\eqref{eq:proof-E3} and Lemma~\ref{E34 nondecreasing lem} (part~1).
\end{proof}

\begin{ex}[Example \ref{E3 splits} continued] 
We revisit the networks in Example~\ref{E3 splits}, where $\NN'$ is obtained from $\NN$ by an E3 operation. The deficiencies  are
$\delta(\NN)=3-1-2=0$ and
$\delta(\NN')=6-2-2=2$, and 
the cyclomatic numbers
are
$\operatorname{cyc}(\NN)=6-3+1=4$
and $\operatorname{cyc}(\NN')=6-6+2=2$.
Consistent with Theorem~\ref{thm:E3}, we have $\change{\delta}{\NN}{\NN'} = 2 = - \change{\operatorname{cyc}}{\NN}{\NN'}$.
\end{ex}

\subsection{E4}
Recall that the operation E4 adds a new species $Y$ into some reactions and also adds the reactions $0 \leftrightarrows Y$ (Definition~\ref{def:ops}).  
Also, in Example~\ref{ex:E4},  we saw an instance of an E4 operation that increases the deficiency by $1$ (that is, $\change{\delta}{\NN}{\NN'} =  1$) but does not change the cyclomatic number ($\change{\operatorname{cyc}}{\NN}{\NN'} =  0$).
The main result of this subsection states that, in general, the operation E4 yields the equality
$\change{\delta}{\NN}{\NN'} + \change{\operatorname{cyc}}{\NN}{\NN'} =  1$ (Theorem~\ref{thm:E4}).  We need the following lemma.

\begin{lemma}\label{E4_lem}
 If $\NN'$ is a network obtained from a network~$\NN$ by applying an E4 operation, then  
$ \change{r}{\NN}{\NN'} = 2$
and 
 $\change{\operatorname{rk}}{\NN}{\NN'} =  1$.
\end{lemma}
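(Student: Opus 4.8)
The plan is to verify the two equalities separately, treating $\change{r}{\NN}{\NN'}=2$ as routine bookkeeping and reserving the rank computation for the substantive argument. For the reaction count, I would argue directly from the definition of E4 (Definition~\ref{def:ops}). Inserting the new species $Y$ into existing reactions (as in Definition~\ref{def:split}) does not change how many reactions there are: a reaction $C_i \to C_j$ of $\NN$ becomes $C_i + c_1 Y \to C_j + c_2 Y$, and two such modified reactions coincide only if their restrictions to the old species coincide, that is, only if the original reactions were already equal. Hence the modification step still contributes exactly $r_\NN$ reactions. The only genuinely new reactions are the pair $0 \leftrightarrows Y$; these are new because $Y \notin \XX_\NN$, so neither $0 \to Y$ nor $Y \to 0$ could previously belong to $\RR_\NN$. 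Therefore $r_{\NN'} = r_\NN + 2$.

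For the rank, I would order the species of $\NN'$ so that $Y$ is last and order the reactions so that $0 \to Y$ and $Y \to 0$ are last. Because the two inflow--outflow reactions involve only $Y$, the old-species rows are zero in their columns, and because the inserted $Y$ appears only in (modified) old reactions, the stoichiometric matrix takes the block form
\[
\Gamma_{\NN'} ~=~
\left(
\begin{array}{c|cc}
\Gamma_{\NN} & \mathbf{0} & \mathbf{0} \\
\hline
v^{\intercal} & 1 & -1
\end{array}
\right)~,
\]
where $v^{\intercal}$ records the net $Y$-stoichiometry of each old reaction. Since the last two columns are negatives of one another, they contribute a single dimension, so I may discard the column for $Y \to 0$ without affecting the rank.

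It then suffices to show that the matrix obtained after deleting that column has rank $\operatorname{rk}(\NN)+1$. The cleanest route is to exploit the pivot entry $1$ in the bottom-right corner: subtracting $v_j$ times the last column from each old column $j$ clears the bottom-left block $v^{\intercal}$ to zero, producing the block-diagonal matrix $\left(\begin{smallmatrix} \Gamma_\NN & \mathbf 0 \\ \mathbf 0 & 1\end{smallmatrix}\right)$, whose rank is $\operatorname{rk}(\Gamma_\NN)+1 = \operatorname{rk}(\NN)+1$. Since column operations preserve rank, this gives $\change{\operatorname{rk}}{\NN}{\NN'}=1$. Equivalently, one can transpose and invoke Lemma~\ref{lem:block-upper-triangular} with bottom-right block $C=(1)$ of full column rank. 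I expect the single point requiring care to be precisely this step: a priori the old columns of $\Gamma_{\NN'}$ may have rank either $\operatorname{rk}(\NN)$ or $\operatorname{rk}(\NN)+1$, depending on the linear dependencies among the old reaction vectors, so one cannot naively claim that $0 \to Y$ adds a fresh dimension to the columns already present. The elementary-operation (equivalently, block-triangular) argument sidesteps this issue, since it computes the total rank in one stroke and returns $\operatorname{rk}(\NN)+1$ in every case.
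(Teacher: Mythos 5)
Your proposal is correct and takes essentially the same approach as the paper: both write $\Gamma_{\NN'}$ in the identical block-triangular form, with the row for $Y$ below $\Gamma_{\NN}$ and the two columns for $0 \leftrightarrows Y$ on the right, and both treat $\change{r}{\NN}{\NN'}=2$ as immediate from the definition of E4. The only difference is mechanical: the paper observes that the first $s_{\NN}$ rows span a $\operatorname{rk}(\NN)$-dimensional space and that the $Y$-row, being nonzero in the new columns where every old row vanishes, lies outside that span, whereas you clear the bottom-left block by column operations (equivalently, Lemma~\ref{lem:block-upper-triangular} applied to the transpose) --- which is exactly how one sidesteps the old-column subtlety you correctly flag at the end.
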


\begin{proof} 
The operation E4 adds a new species $Y$ (into existing reactions) and additionally adds the pair of reversible reactions $0 \leftrightarrows Y$ (so, the change in the number of reactions is $ \change{r}{\NN}{\NN'} = 2$).  The stoichiometric matrix of $\NN'$ therefore has the following block lower-triangular form:
\begin{align} \label{eq:stoic-matrix-E4}
\Gamma_{\NN'}
~=~
\left(
	\begin{array}{ccc|cc}
	&&&    0 &  0 \\	
	& \Gamma_{\NN} & & \vdots & \vdots   \\
	&& & 0 & 0 \\	
    \hline
    * & \dots & * & 1 & -1 \\
	\end{array}
\right)~,
\end{align}
where the last row corresponds to species $Y$ and the last two columns correspond to the reactions $0 \to Y$ and $0 \leftarrow Y$, respectively.  
By construction, the first $s_{\NN}$ rows of $\Gamma_{\NN'}$, in~\eqref{eq:stoic-matrix-E4}, span a $\operatorname{rk}({\NN})$-dimensional subspace, which we denote by $S$.  Next, the last row of $\Gamma_{\NN'}$ is readily seen to not be in $S$.  Hence, $\operatorname{rk}(\NN')= \operatorname{rk}(\NN) +1$, that is, $\change{\operatorname{rk}}{\NN}{\NN'} =  1$.
\end{proof}

\begin{thm}[Deficiency after E4]\label{E4 Theorem}\label{thm:E4}
 If $\NN'$ is a network obtained from a network~$\NN$ by applying an E4 operation, then $\change{\delta}{\NN}{\NN'} \ge 0$ and
    $$\change{\delta}{\NN}{\NN'} ~=~ 
        -\change{\operatorname{cyc}}{\NN}{\NN'} + 1.$$
\end{thm}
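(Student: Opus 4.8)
The plan is to reduce the statement to a short algebraic calculation built on the two preceding lemmas, since those lemmas already contain all the genuine content. I would begin by writing down the two ``change'' identities that follow immediately from Definition~\ref{def:cyclo-deficiency}:
\[
\change{\delta}{\NN}{\NN'} ~=~ \change{c}{\NN}{\NN'} - \change{\ell}{\NN}{\NN'} - \change{\operatorname{rk}}{\NN}{\NN'}
\quad \text{and} \quad
\change{\operatorname{cyc}}{\NN}{\NN'} ~=~ \change{r}{\NN}{\NN'} - \change{c}{\NN}{\NN'} + \change{\ell}{\NN}{\NN'}.
\]
The idea is that the quantity $\change{c}{\NN}{\NN'} - \change{\ell}{\NN}{\NN'}$ appears (with opposite signs) in both expressions, so adding the two identities will cancel it and leave something depending only on $\change{r}{\NN}{\NN'}$ and $\change{\operatorname{rk}}{\NN}{\NN'}$, both of which are pinned down by Lemma~\ref{E4_lem}.

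Next I would substitute the values $\change{r}{\NN}{\NN'} = 2$ and $\change{\operatorname{rk}}{\NN}{\NN'} = 1$ from Lemma~\ref{E4_lem}. This gives
\[
\change{\delta}{\NN}{\NN'} ~=~ \bigl( \change{c}{\NN}{\NN'} - \change{\ell}{\NN}{\NN'} \bigr) - 1
\quad \text{and} \quad
\change{\operatorname{cyc}}{\NN}{\NN'} ~=~ 2 - \bigl( \change{c}{\NN}{\NN'} - \change{\ell}{\NN}{\NN'} \bigr).
\]
Adding these two equations eliminates the common term and yields $\change{\delta}{\NN}{\NN'} + \change{\operatorname{cyc}}{\NN}{\NN'} = 1$, which rearranges to the claimed equality $\change{\delta}{\NN}{\NN'} = -\change{\operatorname{cyc}}{\NN}{\NN'} + 1$.

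For the inequality $\change{\delta}{\NN}{\NN'} \ge 0$, I would invoke part~2 of Lemma~\ref{E34 nondecreasing lem}, which gives $\change{\ell}{\NN}{\NN'} + 1 \le \change{c}{\NN}{\NN'}$, equivalently $\change{c}{\NN}{\NN'} - \change{\ell}{\NN}{\NN'} \ge 1$. Plugging this into the first displayed expression above gives $\change{\delta}{\NN}{\NN'} \ge 1 - 1 = 0$, completing the proof. I do not expect a real obstacle at the level of this theorem: the delicate combinatorial bookkeeping (the ``breaking'' of linkage classes and the spanning-tree/forest argument bounding the number of new complexes) has already been discharged in Lemma~\ref{E34 nondecreasing lem}, and the rank increase has been verified via the block lower-triangular form in Lemma~\ref{E4_lem}. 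If anything merits a second look, it is simply confirming that the hypotheses of Lemma~\ref{E34 nondecreasing lem} part~2 and Lemma~\ref{E4_lem} apply verbatim to the network $\NN'$ produced by the E4 operation, which they do by definition of E4.
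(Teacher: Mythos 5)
Your proposal is correct and follows essentially the same route as the paper: both derive the equality by combining the two change identities with Lemma~\ref{E4_lem} (the paper substitutes one identity into the other where you add them, a cosmetic difference), and both obtain $\change{\delta}{\NN}{\NN'} \ge 0$ from Lemma~\ref{E34 nondecreasing lem} (part~2) via $\change{c}{\NN}{\NN'} - \change{\ell}{\NN}{\NN'} \ge 1$.
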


\begin{proof}
Using Lemma~\ref{E4_lem} and the definition of cyclomatic number, we compute as follows:
    \begin{align} \label{eq:proof-E4}
    \notag
    \change{\delta}{\NN}{\NN'} &~=~ \change{c}{\NN}{\NN'} - \change{\ell}{\NN}{\NN'} - \change{\operatorname{rk}}{\NN}{\NN'}  \\
        &~=~ \change{c}{\NN}{\NN'} - \change{\ell}{\NN}{\NN'} - 1\\
    \notag
        &~=~ -\change{\operatorname{cyc}}{\NN}{\NN'} + \change{r}{\NN}{\NN'} - 1
        \\
    \notag
        &~=~ -\change{\operatorname{cyc}}{\NN}{\NN'} + 1.
    \end{align}
Finally, 
$\change{\delta}{\NN}{\NN'} \geq 0$ follows from equation~\eqref{eq:proof-E4} and Lemma~\ref{E34 nondecreasing lem} (part~2).
\end{proof}

\begin{ex} \label{ex:after-E4-theorem}
In the following networks, which we denote by $\NN$ and $\NN'$, respectively, the network $\NN'$ is obtained from $\NN$ by performing an E4 operation:
\begin{center}
    \includegraphics[width=12cm]{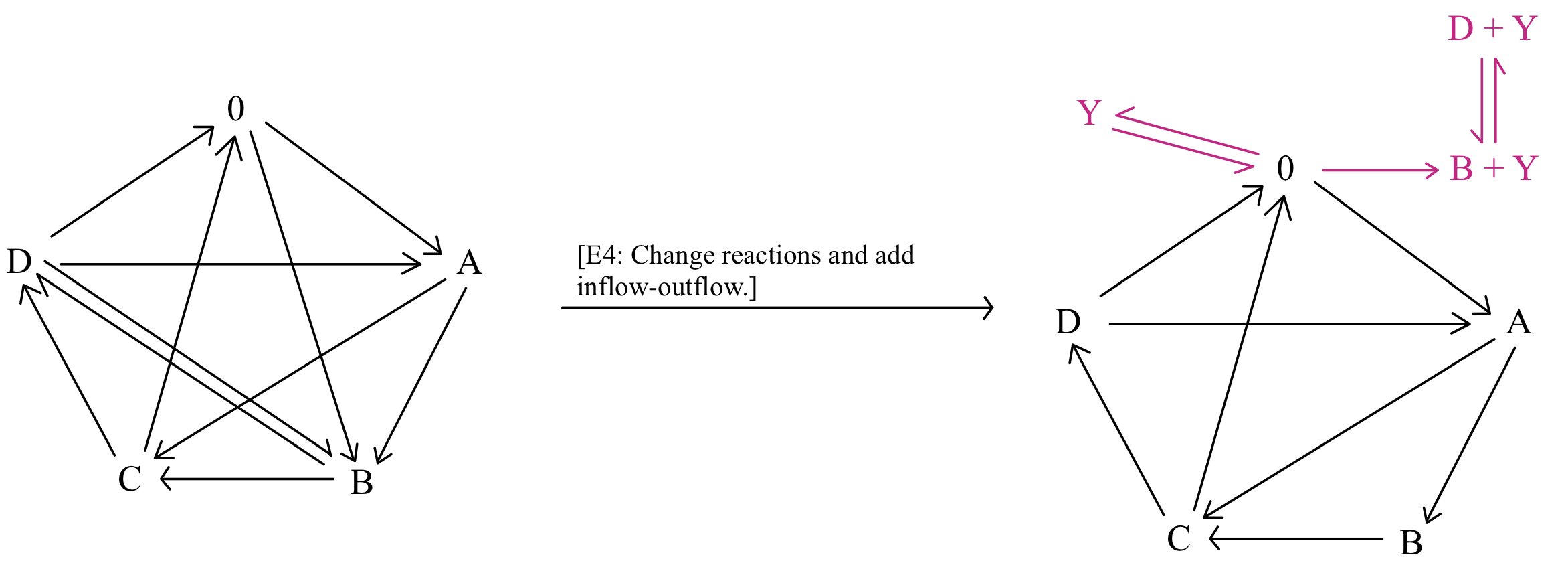}
\end{center}
The deficiencies are $\delta(\NN) = 5 - 1 - 4 = 0 \text{ and } \delta(\NN') = 8 - 1 - 5 = 2$, while the cyclomatic numbers are
$\operatorname{cyc}(\NN) = 11 - 5 + 1 = 7$ and $\operatorname{cyc}(\NN') = 13 - 8 + 1 = 6$. 
Observe that 
$\change{\delta}{\NN}{\NN'} +
\change{\operatorname{cyc}}{\NN'}{\NN} = 2+(-1)=1$, which is consistent with Theorem~\ref{thm:E4}.
\end{ex}

\subsection{E5}
Recall that the operation E5 adds $m$ new reversible reactions and at least $m$ new species to those reactions, such that the rank-$m$ condition holds (Definition~\ref{def:ops}).  
Also, an instance of this operation was seen to preserve the deficiency (Example~\ref{ex:E5}).  We now prove that this holds in general.

\begin{thm}
 \label{thm:E5}
 If $\NN'$ is a network obtained from a network~$\NN$ by applying an E5 operation, then $\change{\delta}{\NN}{\NN'} = 0$. 
\end{thm}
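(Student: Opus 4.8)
The plan is to expand $\change{\delta}{\NN}{\NN'} = \change{c}{\NN}{\NN'} - \change{\ell}{\NN}{\NN'} - \change{\operatorname{rk}}{\NN}{\NN'}$ and to show that the rank-$m$ condition forces both $\change{\operatorname{rk}}{\NN}{\NN'} = m$ and $\change{c}{\NN}{\NN'} - \change{\ell}{\NN}{\NN'} = m$, so that the two contributions cancel. Here $m$ is the number of new reversible pairs, so $\change{r}{\NN}{\NN'} = 2m$.

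First I would compute $\change{\operatorname{rk}}{\NN}{\NN'} = m$. By the block upper-triangular form of $\Gamma_{\NN'}$ in~\eqref{eq:rank-condition}, and keeping only one reaction vector per reversible pair (which does not change the column span, as noted in Remark~\ref{rem:reversible-reactions-stoic-matrix}), the matrix has the shape treated in Lemma~\ref{lem:block-upper-triangular}, with the lower-right block $C$ being the $(m+i)\times m$ submatrix recording the new species in the $m$ new (reduced) reactions. The rank-$m$ condition says exactly that $\operatorname{rk}(C) = m$, so Lemma~\ref{lem:block-upper-triangular} gives $\operatorname{rk}(\NN') = \operatorname{rk}(\NN) + m$.

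The heart of the argument is the identity $\change{c}{\NN}{\NN'} - \change{\ell}{\NN}{\NN'} = m$. I would add the $m$ reversible pairs one at a time, producing networks $\NN = \NN_0, \NN_1, \dots, \NN_m = \NN'$. By Lemma~\ref{lem:add-reactions}, each step contributes $0$ or $1$ to $\change{c}{\NN_{k-1}}{\NN_k} - \change{\ell}{\NN_{k-1}}{\NN_k}$, and it contributes $1$ exactly when the two complexes joined by the new pair do not already lie in a common linkage class of $\NN_{k-1}$. These per-step differences telescope, so it suffices to show that this ``different linkage class'' situation occurs at all $m$ steps. Suppose toward a contradiction that at some step a new pair $C \leftrightarrows D$ joins complexes already in a common linkage class of $\NN_{k-1}$. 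Then $\NN_{k-1}$ contains a (simple) undirected path from $C$ to $D$, which together with the new edge forms a cycle. Summing the signed reaction vectors around this cycle yields $0$. Restricting to the coordinates of the new species, every old reaction contributes $0$ (new species occur only in new reactions), so the restriction reduces to a combination, with coefficients in $\{+1,-1\}$, of the new-species parts of the reaction vectors of the distinct new pairs on the cycle. These new-species parts are distinct columns of $C$, hence linearly independent since $\operatorname{rk}(C) = m$; moreover the just-added pair lies on the cycle, so the combination is nontrivial, giving a contradiction. Therefore every step joins distinct linkage classes, $\change{c}{\NN}{\NN'} - \change{\ell}{\NN}{\NN'} = m$, and combining with the rank computation yields $\change{\delta}{\NN}{\NN'} = m - m = 0$.

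The main obstacle is precisely this second part: ruling out that a newly added pair closes a cycle through complexes already present. The rank-$m$ condition is exactly what makes this work, through the linear independence of the new-species components of the new reaction vectors. Without it, a new pair could join two complexes already in a common linkage class, lowering $\change{c}{\NN}{\NN'} - \change{\ell}{\NN}{\NN'}$ while $\change{\operatorname{rk}}{\NN}{\NN'}$ stays at $m$, and the deficiency would then fail to be preserved.
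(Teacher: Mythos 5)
Your proposal is correct and follows essentially the same route as the paper's proof: compute $\change{\operatorname{rk}}{\NN}{\NN'}=m$ from the block-triangular form and Lemma~\ref{lem:block-upper-triangular}, then add the $m$ reversible pairs one at a time and apply Lemma~\ref{lem:add-reactions} after ruling out that a new pair ever joins two complexes already in a common linkage class, so that $\change{c}{\NN}{\NN'}-\change{\ell}{\NN}{\NN'}=m$. Your cycle-plus-projection argument for that key claim is an inlined version of the paper's appeal to Lemma~\ref{E5' cycles lemma} combined with the rank count~\eqref{eq:rank-change-E5}; it makes explicit the linear-independence contradiction that the paper dismisses as ``straightforward to show.''
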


\begin{proof}
Denote the $m$ new reactions (involving $m+i$ new species) of the E5 operation as follows:
\begin{align}
    \label{eq:reactions-E5}
C(1) \leftrightarrows D(1)~, \quad  \dots, \quad 
C(m) \leftrightarrows D(m)~.
\end{align}
The stoichiometric matrix for $\NN'$ (where only one reaction vector is included for each added reversible reaction, as in Remark~\ref{rem:reversible-reactions-stoic-matrix}) is as follows:
\begin{align} \label{eq:matrix-E5-proof}
\Gamma_{\NN'}
~=~
\left(
	\begin{array}{ccc|ccc}
	&&&    (D(1)-C(1))_1 & \dots & (D(m)-C(m))_1 \\	
	& \Gamma_{\NN} & & \vdots &  & \vdots  \\
	&& &  & & \\	
    \cline{1-3} 
	0 & \dots &  0 &   &    &  \\
	\vdots & \ddots &  \vdots &	\vdots &  & \vdots \\
    0 & \dots & 0 & (D(1)-C(1))_{s_{\NN}+ m+i} & \dots & (D(m)-C(m))_{s_{\NN}+ m+i} \\
	\end{array}
\right)~.
\end{align}
The rank-$m$ condition for E5, plus the block structure of $\Gamma_{\NN'}$ in~\eqref{eq:matrix-E5-proof}, together satisfy the hypotheses of Lemma~\ref{lem:block-upper-triangular}, and so we conclude the following:
\begin{align} \label{eq:rank-change-E5}
    \change{\operatorname{rk}}{\NN}{\NN'}~=~m~.
\end{align}

Our next aim is to compute $\change{c}{\NN}{\NN'}- \change{\ell}{\NN}{\NN'}$ using Lemma~\ref{lem:add-reactions}.  To this end, for $j \in [m]$, let $\NN(j)$ denote the network obtained from $\NN$ by adding to $\NN$ the first $j$ reactions of~\eqref{eq:reactions-E5} (so, $\NN(m)=\NN'$).  For convenience, let $\NN(0):=\NN$.

Fix $j \in [m]$.  We claim that
no linkage class of $\NN(j-1)$ contains both complexes $C(j)$ and $D(j)$.  Indeed, if they were in the same linkage class of $\NN(j-1)$, then the column vector $D(j)-C(j)$, in~$\Gamma_{\NN'}$ as shown in~\eqref{eq:matrix-E5-proof}, would be in the column space of the submatrix to the left of that vector (by Lemma~\ref{E5' cycles lemma}), and it is straightforward to show that this would violate~\eqref{eq:rank-change-E5}.  

We conclude that Lemma~\ref{lem:add-reactions} applies and so $\change{c}{\NN(j)}{\NN(j-1)} - \change{\ell}{\NN(j)}{\NN(j-1)}=1$.  This equality yields:
\begin{align} \label{eq:total-change-c-ell-E5}
\change{c}{\NN}{\NN'} - \change{\ell}{\NN}{\NN'} ~&=~
\sum_{j=1}^m \left( \change{c}{\NN(j)}{\NN(j-1)}- \change{\ell}{\NN(j)}{\NN(j-1)} \right)
~=~
\sum_{j=1}^m 1~=~ m~.
\end{align}
Hence, by~\eqref{eq:rank-change-E5}
 and~\eqref{eq:total-change-c-ell-E5}, 
 $\change{\delta}{\NN}{\NN'}= {\big (}\change{c}{\NN}{\NN'} - \change{\ell}{\NN}{\NN'} {\big )} -\change{\operatorname{rk}}{\NN}{\NN'}=m-m=0$.
\end{proof}

\subsection{E6}
Recall that the operation E6 splits $m$ reactions, and inserts 
at least $m$
species in the new complexes 
(Definition~\ref{def:ops}).  We saw an example in which this operation preserves the deficiency (Example~\ref{ex:E6}), and now we show that this holds in general.

\begin{thm}\label{E6 Theorem}
 \label{thm:E6}
 If $\NN'$ is a network obtained from a network~$\NN$ by applying an E6 operation, then $$\change{\delta}{\NN}{\NN'} ~=~ 0~.$$
\end{thm}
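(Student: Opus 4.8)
The plan is to compute $\change{\delta}{\NN}{\NN'}$ from the identity $\change{\delta}{\NN}{\NN'} = \change{c}{\NN}{\NN'} - \change{\ell}{\NN}{\NN'} - \change{\operatorname{rk}}{\NN}{\NN'}$ and show the three contributions cancel. Two of them are immediate from the combinatorics of splitting. Splitting a reaction $C_{i_k} \to C_{j_k}$ replaces it with $C_{i_k} \to D_k$ and $D_k \to C_{j_k}$ for a single new complex $D_k \notin \CC_\NN$; since $m$ reactions are split and the rank-$m$ condition forces each $D_k$ to have a nonzero (indeed linearly independent) new-species part, the $m$ new complexes are distinct and new, giving $\change{c}{\NN}{\NN'} = m$. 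Moreover, each split merely subdivides an edge of the directed graph of $\NN$ by inserting the vertex $D_k$ on the path from $C_{i_k}$ to $C_{j_k}$; edge subdivision leaves connectivity unchanged, so $\change{\ell}{\NN}{\NN'} = 0$. It then remains to prove $\change{\operatorname{rk}}{\NN}{\NN'} = m$, after which $\change{\delta}{\NN}{\NN'} = m - 0 - m = 0$.

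The rank computation is the heart of the argument and is where the rank-$m$ condition enters. For each split $k$, write the two new reaction vectors as $v_k := D_k - C_{i_k}$ and $w_k := C_{j_k} - D_k$, and observe that $v_k + w_k = C_{j_k} - C_{i_k}$ is exactly the reaction vector of the original unsplit reaction, hence lives in the old-species coordinates and vanishes in the new-species rows. The idea is to perform, for each $k$, the rank-preserving column operation that replaces the column $w_k$ by $v_k + w_k$. After these operations the columns of $\Gamma_{\NN'}$ split into two groups: group (i), the $r_\NN$ columns consisting of the unsplit reaction vectors together with the vectors $v_k + w_k$, which are precisely the reaction vectors of $\NN$ and so are zero in the new-species rows; and group (ii), the $m$ columns $v_1, \dots, v_m$, whose entries in the new-species rows are the new-species parts $D_1^{\mathrm{new}}, \dots, D_m^{\mathrm{new}}$ (the projections of the complex vectors $D_k$ onto the new-species coordinates).

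Reordering so that group (i) precedes group (ii), and placing the old-species rows above the new-species rows, the resulting matrix takes the block upper-triangular shape $\left(\begin{smallmatrix} \Gamma_\NN & B \\ 0 & C \end{smallmatrix}\right)$ required by Lemma~\ref{lem:block-upper-triangular}, where the top-left block is $\Gamma_\NN$ (its old-species-row restriction to the group (i) columns is, up to reordering, the full list of reaction vectors of $\NN$) and $C = [\,D_1^{\mathrm{new}}\ \cdots\ D_m^{\mathrm{new}}\,]$ is the new-species block of group (ii). The rank-$m$ condition for E6 states that the new-species rows of $\Gamma_{\NN'}$ have rank $m$; since those rows consist, up to sign, of the columns $D_k^{\mathrm{new}}$, this forces $\operatorname{rk}(C) = m$, which equals the number of columns of $C$. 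Lemma~\ref{lem:block-upper-triangular} then gives $\operatorname{rk}(\Gamma_{\NN'}) = \operatorname{rk}(\Gamma_\NN) + \operatorname{rk}(C) = \operatorname{rk}(\NN) + m$, and because column operations preserve rank we conclude $\change{\operatorname{rk}}{\NN}{\NN'} = m$, completing the proof.

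The main obstacle I anticipate is the bookkeeping in the rank step: one must check carefully that after the column operations the top-left block is genuinely $\Gamma_\NN$ (so its rank is $\operatorname{rk}(\NN)$) and that the bottom-left block is exactly zero, and one must confirm that the rank-$m$ condition translates into $C$ having full column rank $m$ rather than some weaker statement. Everything else—the count of added complexes and the invariance of the number of linkage classes—is routine once splitting is recognized as edge subdivision.
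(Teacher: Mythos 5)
Your proposal is correct and takes essentially the same approach as the paper: the same decomposition into proving $\change{c}{\NN}{\NN'} = m$, $\change{\ell}{\NN}{\NN'} = 0$, and $\change{\operatorname{rk}}{\NN}{\NN'} = m$, the same use of the rank-$m$ condition to show the new complexes have linearly independent (hence nonzero and distinct) new-species parts, and the same appeal to Lemma~\ref{lem:block-upper-triangular} via the identity $(D_k - C_{i_k}) + (C_{j_k} - D_k) = C_{j_k} - C_{i_k}$. The only difference is in bookkeeping: the paper obtains the block-triangular structure by adjoining the old reaction vectors of $\NN$ to $\Gamma_{\NN'}$ (its Claim~1), whereas you obtain it by column operations replacing $w_k$ with $v_k + w_k$, which if anything meets the full-column-rank hypothesis of the lemma more cleanly.
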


\begin{proof}
Let $m$ denote the number of split reactions, and let $m+i$ denote the number of new species (as in Definition~\ref{def:ops}).  Recall that $\change{\delta}{\NN}{\NN'} =
    \change{c}{\NN}{\NN'} - \change{\ell}{\NN}{\NN'} - \change{\operatorname{rk}}{\NN}{\NN'}$.  It therefore suffices to prove the following: 
    (1) $\change{c}{\NN}{\NN'} = m$, 
    (2) $\change{\ell}{\NN}{\NN'}=0$, and 
    (3) $\change{\operatorname{rk}}{\NN}{\NN'} = m$.  

We begin by proving~(3).  We first prove the following claim:

\uline{Claim 1}: The following
$(s_{\NN} +m+i) \times (r_\NN + r_{\NN'})$ matrix has the same column space as $\Gamma_{\NN'}$ (the stoichiometric matrix of $\NN'$):
\begin{align} \label{eq:bigger-matrix}
M
~=~
\left(
	\begin{array}{ccc|ccc}
	&&&     &  &  \\	
	& \Gamma_{\NN} & &  &  &    \\
	&& &  & \Gamma_{\NN'} &  \\	
    \cline{1-3} 
	0 & \dots &  0 &   &    &  \\
	\vdots & \ddots &   &	 &  &  \\
    0 & \dots & 0 &  &  &  \\
	\end{array}
\right)~.
\end{align}

To verify this claim, first observe that the matrix $M$, in~\eqref{eq:bigger-matrix}, is obtained from $\Gamma_{\NN'}$ by 
adjoining (on the left) the reaction vectors $D-C$ (viewed in $\mathbb{R}^{s_{\NN}+m+i}$ rather than in $\mathbb{R}^{s_{\NN}}$) for each reaction $C \to D$ of $\NN$.  For each such reaction, there are two possibilities.  First, if $C\to D$ is {\em not} split by the E6 operation, then $D-C$ is also a column of $\Gamma_{\NN'}$ and so is in the column space of $\Gamma_{\NN'}$.  On the other hand, if $C\to D$ is split and becomes $C \to Z \to D$, where $Z$ is a new complex, then the reaction vectors $Z-C$ and $D-Z$ are columns of $\Gamma_{\NN'}$ and hence their sum $(Z-C)+(D-Z)=D-C$ is in the column space of $\Gamma_{\NN'}$.  We conclude that Claim~1 is true.

Next, Claim~1 and the block structure of the matrix $M$, in~\eqref{eq:bigger-matrix}, together satisfy the hypotheses of Lemma~\ref{lem:block-upper-triangular}. We conclude that
$\operatorname{rk}(\NN') $ is the sum of $\operatorname{rk}(\NN) $ and the rank of the lower-right submatrix of $M$.  This submatrix, by definition of E6, has rank $m$.  Thus, $\change{\operatorname{rk}}{\NN}{\NN'} = m$, and so part~(3) holds.

Next, to prove~(1), it suffices to show the following claim:

\uline{Claim 2}: The complexes added in the $m$ split reactions are distinct, and each such complex involves at least 1 new species.

To prove Claim~2, we begin by denoting the $m$ reactions -- after they are split by E6 -- as follows:
\[
C(1) \to Z(1) \to D(1)~, \quad  \dots, \quad 
C(m) \to Z(m) \to D(m)~.
\]
We must show that each complex $Z(j)$ is distinct and involves new species.

Let $\pi: \N_0^{s_{\NN}+m+i} \to  \N_0^{m+i} $ denote the projection onto the coordinates for the $m+i$ new species.  With this notation, we see that in the submatrix formed by the rows of $\Gamma_{\NN'}$ corresponding to the new species -- we denote this submatrix by $\Lambda$ -- the columns are $\pi(v_j)$, where $v_j$ is the reaction vector of the $j$-th reaction of $\NN'$.  If such a reaction is {\em not} split by E6, then $\pi(v_i)=0$ (the reaction does not involve new species).  On the other hand, for reactions that are split, the corresponding columns come in pairs, namely,
$\pi(Z(j)-C(j))=\pi(Z(j))$ (as $C(j)$ is a complex of $\NN$ and hence does not involve new species) and 
$\pi(D(j)-Z(j))=-\pi(Z(j))$.  

Thus, the set 
$B=\{\pi(Z(j)) \mid j\in [m]\}$ spans
the column space of $\Lambda$, which, by definition of E6 (and the fact that column rank equals row rank), has dimension $m$.
Hence, $B$ is a basis of the column space and so
the $\pi(Z(j))$, for $j\in [m]$, are nonzero and distinct.  Therefore, the complexes $Z(j)$ must involve new species and also be distinct, which completes the proof of Claim~2.

Only part~(2) remains to be proven.  Claim~2 implies that when reactions are split in the E6 operation, linkage classes are enlarged, but not joined or created.  Hence, $\change{\ell}{\NN}{\NN'}=0$.
\end{proof}

\section{Discussion} \label{sec:discussion}
In this work, we established a connection between two research streams pertaining to chemical reaction networks, namely, deficiency theory and the theory of ``lifting'' results.
Specifically, we proved that the 
deficiency never decreases when any of the six network operations E1--E6 are performed, and, moreover, we characterized the numerical difference in deficiency after performing each of these operations (Theorem \ref{thm:main-result-summary}). 
This means that chemical reaction networks may be arbitrarily enlarged using any of these operations, and the resulting networks preserve important dynamical properties --  nondegenerate multistationarity and periodic orbits~\cite{Banaji} 
and bifurcations~\cite{banaji2023inheritance}
-- 
with predictable changes in deficiency. In particular, only E5 and E6 always  leave the deficiency unchanged.

Additionally, we posed two conjectures.  First, we conjectured that the operation E5$'$, which generalizes E5, does not preserve nondegenerate periodic orbits (Section~\ref{sec:general-rank-condition}).
On the other hand, we conjectured that 
a network operation that generalizes E2, which was introduced recently by Cappelletti, Feliu, and Wiuf, 
does preserve nondegenerate periodic orbits (Remark~\ref{rem:cappelletti}).

Going forward, the theory of ``lifting'' results is an active research area. We therefore anticipate wanting to prove results pertaining to deficiency that are analogous to the results in this work, 
as new ``lifting'' results are proven. 
Our proofs for the operations E1--E6 lay the groundwork for such future investigations.


\section*{Acknowledgements}
{\small
Awildo Gutierrez, Elijah Leake, and Caelyn Rivas--Sobie initiated this
research in the 2022 MSRI-UP REU, 
which is supported by the NSF (DMS-2149642) and the Alfred P. Sloan Foundation, in which Jordy Lopez Garcia and Anne Shiu were mentors. 
The authors are grateful to the Simons Laufer Mathematical Sciences Institute (SLMath, formerly MSRI) for providing a magnificent working environment and to Scribble Together for supporting our remote collaboration.
AS was supported by the NSF (DMS-1752672).
The authors thank 
Federico Ardila
for many helpful discussions -- especially pertaining to cyclomatic numbers -- and 
for his leadership as on-site director of MSRI-UP. 
The authors also thank Saber Ahmed, Sally Cockburn, 
and Natasha Crepeau
for helpful comments on an earlier draft of this work, and Chun-Hung Liu for his perspective on cyclomatic numbers. Finally, we would like to thank the anonymous referees for their detailed suggestions.

\end{document}